\newtheorem{thm}{Theorem}
\newtheorem{claim}{Claim}
\newtheorem{lem}{Lemma}
\newtheorem{cor}[lem]{Corollary}
\newtheorem{prop}[lem]{Proposition}
\newtheorem{question}{Question}
\newtheorem{conjecture}{Conjecture}
\newtheorem{emp}{\empstring}
\theoremstyle{definition}
\newtheorem{defn}[lem]{Definition}
\newtheorem{example}[lem]{Example}
\newtheorem{note}[lem]{Note}
\theoremstyle{remark}
\newtheorem{remark}[lem]{Remark}
\newenvironment{enumeq}[1]{
\begin{enumerate}[label=(\arabic*), ref=\arabic*, widest=#1]
\setcounter{enumi}{\value{equation}}}
{\setcounter{equation}{\value{enumi}}\end{enumerate}}
\newcommand{\ch}{\mathrm{CH}}
\newcommand{\pstar}{(*)}
\newcommand{\dom}{\operatorname{dom}}
\newcommand{\power}{\P}
\newcommand{\inv}{^{-1}}
\newcommand{\ext}{^\smallfrown\hspb}
\newcommand\bigext{^\frown}
\renewcommand{\div}{\mathbin{/}}
\newcommand{\imsucc}{\operatorname{imsucc}}
\newcommand{\id}{\operatornamewithlimits{id}}
\newcommand{\@shade}[2]{\nabla_{#1#2}}
\newcommand{\@shadow}[2]{\Delta_{#1#2}}
\newcommand{\shadeto}[1]{\@shade{\to}{#1}}
\newcommand{\shadowto}[1]{\@shadow{\to}{#1}}
\providecommand{\restriction}{\mathbin{\upharpoonright}}
\newcommand{\@@eq}[2]{=_{#1}^{#2}}
\newcommand{\@@neq}[2]{\ne_{#1}^{#2}}
\newcommand{\@@l}[2]{<_{#1}^{#2}}
\newcommand{\@@nl}[2]{\nless_{#1}^{#2}}
\newcommand{\@@le}[2]{\le_{#1}^{#2}}
\newcommand{\@@nle}[2]{\nleq_{#1}^{#2}}
\newcommand{\@@equiv}[2]{\equiv_{#1}^{#2}}
\newcommand{\@in}[1]{\in_#1}
\newcommand{\@eq}[1]{=_#1}
\newcommand{\@le}[1]{\le_{#1}}
\newcommand{\@l}[1]{<_#1}
\newcommand{\@nle}[1]{\nleq_#1}
\newcommand{\@ge}[1]{\ge_#1}
\newcommand{\u@le}[1]{\le^#1}
\newcommand{\u@l}[1]{<^#1}
\newcommand{\u@eq}[1]{=^#1}
\newcommand{\@lefnt}[1]{\@@le#1*}
\newcommand{\@nlefnt}[1]{\@@nle#1*}
\newcommand{\@eqfnt}[1]{\@@eq#1*}
\newcommand{\lefnt}{\mathrel{\le^*}}
\newcommand{\len}[1]{\@le{#1}}
\newcommand{\lln}[1]{\@l#1}
\newcommand{\gen}[1]{\@ge{{#1}}}
\newcommand{\str}{\mathrm{str}}
\newcommand{\asym}{\mathrm{asym}}
\newcommand{\trn}{\mathrm{tran}}
\newcommand{\negstr}{-\str}
\newcommand{\strlen}[1]{\@@le{#1}\str}
\newcommand{\nstrlen}[1]{\@@nle{#1}\str}
\newcommand{\strln}[1]{\@@l{#1}\str}
\newcommand{\nstrln}[1]{\@@nl{#1}\str}
\newcommand{\strequiv}[1]{\@@equiv{#1}\str}
\newcommand{\lenn}[2]{\@@le{#1}{#2}}
\newcommand{\nlenn}[2]{\@@nle{#1}{#2}}
\newcommand{\lnn}[2]{\@@l{#1}{#2}}
\newcommand{\nlnn}[2]{\@@nl{#1}{#2}}
\newcommand{\nleu}[1]{\nleq^{#1}}
\newcommand{\eqnn}[2]{\@@eq{#1}{#2}}
\newcommand{\neqnn}[2]{\@@neq{#1}{#2}}
\newcommand{\asymle}{\u@le\asym}
\newcommand{\asyml}{\u@l\asym}
\newcommand{\strtrnlen}[1]{\@@le{#1}{\str,\trn}}
\newcommand{\strtrnln}[1]{\@@l{#1}{\str,\trn}}
\newcommand{\trnle}{\u@le{\trn}}
\newcommand{\trnl}{\u@l{\trn}}
\newcommand{\trneq}{\u@eq{\trn}}
\newcommand{\negstrlen}[1]{\@@le{#1}\negstr}
\newcommand{\nnegstrlen}[1]{\@@nle{#1}\negstr}
\newcommand{\nnegstrln}[1]{\@@nl{#1}\negstr}
\newcommand{\negstreq}[1]{\@@eq{#1}\negstr}
\renewcommand{\aa}{\mathrm{aa}}
\newcommand{\eqaa}{\@eq\aa}
\newcommand{\inaa}{\@in\aa}
\newcommand{\leaa}{\@le\aa}
\newcommand{\nleaa}{\@nle\aa}
\newcommand{\geaa}{\@ge\aa}
\newcommand{\lefntaa}{\@lefnt\aa}
\newcommand{\nlefntaa}{\@nlefnt\aa}
\newcommand{\eqfntaa}{\@eqfnt\aa}
\newcommand{\naa}{\mathrm{naa}}
\newcommand{\eqnaa}{\@eq\naa}
\newcommand{\eqfntnaa}{\@eqfnt\naa}
\newcommand{\lenaa}{\@le\naa}
\newcommand{\lefntnaa}{\@lefnt\naa}
\newcommand{\eqae}{\@eq\ae}
\newcommand{\leae}{\@le\ae}
\newcommand{\forces}{\mskip 5mu plus 5mu\|\hspace{-2.5pt}{\textstyle \frac{\hspace{4.5pt}}{\hspace{4.5pt}}}\mskip 5mu plus 5mu}
\newcommand{\Iff}{\espc\mathrm{iff}\espc}
\newcommand{\impls}{\espc\mathrm{implies}\espc}
\providecommand{\And}{\espc\mathrm{and}\espc}
\renewcommand{\and}{\hespc\mathrm{and}\hespc}
\newcommand{\Or}{\hespc\mathrm{or}\hespc}
\renewcommand{\b}{\mathfrak b}
\newcommand{\cntum}{\mathfrak c}
\newcommand{\two}{\{0,1\}}
\newcommand{\pN}{\power(\N)}
\newcommand{\reals}{\mathbb R}
\newcommand{\N}{\mathbb N}
\newcommand{\irrational}{\irrationals}
\newcommand{\cantor}{\two^{\hsp\mathbb N}}
\newcommand{\irrationals}{\mathbb N^{\hsp\mathbb N}}
\newcommand{\irri}[1]{\N^{\hsp#1}}
\newcommand{\twoseq}{\two^{<\mathbb N}}
\newcommand{\seq}[1]{{#1}^{<\N}}
\DeclareFontFamily{U}{cmsy}{}
\DeclareFontShape{U}{cmsy}{m}{n}{<12> sfixed * [10] cmsy10 
<10> <9> <8> <7> <6> <5> sfixed * [10] cmsy10}{}
\DeclareSymbolFont{customtwo}{U}{cmsy}{m}{n} 
\DeclareMathSymbol{\sctn}{\mathord}{customtwo}{"78}
\DeclareFontFamily{U}{cmmi}{}
\DeclareFontShape{U}{cmmi}{m}{n}{<20> sfixed * [11] cmmib10 <12> sfixed * [10]
cmmi10 <10> <9> <8> sfixed * [6] cmmi6 <5> <6> <7> sfixed * [5] cmmi5}{}
\DeclareSymbolFont{custom}{U}{cmmi}{m}{n}
\DeclareMathSymbol{\rharpoon}{\mathord}{custom}{"2A}
\newlength{\widt}
\newlength{\widttwo}
\newlength{\hgt}
\newcommand{\real}{\reals}
\newcommand{\espc}{\quad}
\newlength{\@@hespc}
\newcommand{\hespc}{\hspace{\@@hespc}}
\newcommand{\overbar}[1]{\,\overline{\!{#1}}}
\newcommand{\Th}{{^{\mathrm{th}}}}
\renewcommand{\ae}{\mathrm{ae}}
\newcommand{\tu}{\textup}
\newcommand{\A}{\mathcal A}
\newcommand{\B}{\mathcal B}
\newcommand{\C}{\mathcal C}
\newcommand{\D}{\mathcal D}
\newcommand{\F}{\mathcal F}
\newcommand{\G}{\mathcal G}
\newcommand{\Hcal}{\mathcal H}
\renewcommand{\P}{\Pcal}
\newcommand{\Pcal}{\mathcal P}
\newcommand{\Q}{\mathcal Q}
\newcommand{\U}{\mathcal U}
\newcommand{\X}{\mathcal X}
\newcommand{\Fb}{\mathbb F}
\newcommand{\Tb}{\mathbb T}
\newcommand{\hsp}{\mspace{1.5mu}}
\newcommand{\hspb}{\mspace{-1.5mu}}
\newcommand{\spc}{\,\,\,}
\newcounter{saveenumi}
\newcommand{\save}{\setcounter{saveenumi}{\value{enumi}}}
\newcommand{\restore}{\setcounter{enumi}{\value{saveenumi}}}
\renewcommand{\And}{\hespc\mathrm{and}\hespc}
\newcommand{\tree}{\Tb}
\newcommand{\fpf}{\Fb}
\newcommand{\empstring}{Dichotomy}
\begin{document}

\title{Nonhomogeneous analytic families of trees}
\author{James Hirschorn}
\date{}
\maketitle

\renewcommand{\thefootnote}{}

\footnotetext{\emph{Dates}. First version: September 14,
  2003. Revised: August 10, 2008.}
\footnotetext{2000 \emph{MSC}. 
Primary 03E15; Secondary 03E40, 05D05, 28A12.}
\footnotetext{\emph{Key words and phrases}. Splitting real, Souslin forcing,
  cross-$t$-intersecting families.}
\footnotetext{This research was primarily supported by Lise Meitner Fellowship, 
 Fonds zur F\"orderung der wissenschaftlichen Forschung, Project No.~M749-N05;
 the first version was completed 
 with partial support of  Consorcio Centro de Investigaci\'on Matem\'atica, Spanish
 Government grant No.~SB2002-0099.}

\renewcommand{\thefootnote}{\arabic{footnote}}

\begin{abstract}
We consider a dichotomy for analytic families of trees stating that either there is a
colouring of the nodes for which all but finitely many levels
of every tree are nonhomogeneous, 
or else the family contains an uncountable antichain. This dichotomy
implies that every nontrivial Souslin poset satisfying the countable chain condition
adds a splitting real. 

We then reduce the dichotomy to a conjecture of Sperner Theory.
This conjecture is concerning the asymptotic behaviour of the product
of the sizes of the $m$-shades of pairs of cross-$t$-intersecting families.
\end{abstract}

\section{Introduction}
\label{sec:introduction}

The results in this paper are directed at the general question of just how
fundamental the Cohen and random forcing notions are. More specifically we are
interested in the following question from~\cite{MR1303493}.

\begin{question}[Shelah]
\label{q-1}
Does $\{\textup{Cohen},\textup{random}\}$ form a basis for the collection of all nontrivial Souslin ccc
posets\textup{?}
\end{question}

\noindent What came as a surprise, was a deep connection between this
fundamental question in the theory of set theoretic forcing and extremal combinatorics.

 Recall that a poset is \emph{Souslin} if it can be represented a pair
$(P,\le)$ where $P\subseteq\real$ is analytic, and the partial ordering $\le$ and
the incompatibility relation are both analytic subsets of
$\real\times\real$. Clearly the Cohen and random posets are both Souslin. By
\emph{nontrivial} we mean that the poset adds a new generic object to the ground
model; in other words, the poset has a condition with no atoms below it. A
\emph{basis} for a class $\C$ of posets, is a subclass $\B\subseteq\C$ such that
every member $Q$ of $\C$ has a member $P$ in $\B$ which \emph{embeds} into it,
i.e.~there is a map $e:P\to Q^*$, where $Q^*$ is the completion of $Q$ (i.e.~the
complete Boolean algebra of regular open subsets of $Q$), such that $p\le q\to e(p)\le e(q)$
and $e$ preserves maximal antichains; equivalently,
adding a generic object for $Q$ adds a generic for $P$. 

The easier half of this question has been answered
in~\cite{MR1303493}:

\begin{thm}[Shelah, 1994]
\label{u-5}
Every Souslin ccc poset which adds an unbounded real also adds a Cohen real.
\end{thm}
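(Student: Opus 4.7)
The plan is to construct a Cohen real in $V^{\mathbb{P}}$ by exploiting both the analytic structure of $\mathbb{P}$ and the existence of the unbounded real $\dot f$.

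First, I would fix an analytic (Souslin) presentation of $\mathbb{P}$ together with a name $\dot f$ and a condition $p_0\Vdash\dot f\in\omega^\omega$ is unbounded over $V$. By the ccc, for each $n$ I fix a maximal antichain $A_n=\{q_{n,k}:k<\omega\}$ below $p_0$ with $q_{n,k}\Vdash\dot f(n)=v_{n,k}$. Since the underlying set, order, and incompatibility relation of $\mathbb{P}$ are analytic, the forcing relation ``$q\Vdash\dot f(n)=k$'' is analytic in the parameters $(q,n,k)$, so the maps $n\mapsto A_n$ and $(n,k)\mapsto v_{n,k}$ can be extracted uniformly from a single analytic parameter. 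All relevant forcing information about $\dot f$ is thus encoded in an analytic object living in $V$.

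Second, the heart of the argument is a combinatorial dichotomy on the compatibility structure of the matrix $(q_{n,k})_{n,k<\omega}$: either the matrix supports a ``perfect'' downward-branching subtree of pairwise compatible refinements whose labels $v_{n,k}$ grow fast enough, in which case a continuous $V$-definable function $\psi$ from generic filters to $2^\omega$ can be read off along the branches and will yield a candidate Cohen real $\psi(\dot G)$; or else a capturing phenomenon holds, in which for each $n$ the realized values of $\dot f(n)$ below every extension of $p_0$ are confined to a finite initial segment of $A_n$, so that $g(n):=\max\{v_{n,k}:k\text{ captured at stage }n\}$ is a ground-model function dominating $\dot f$, contradicting unboundedness.

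Third, in the splitting alternative I would verify Cohen-genericity of $\psi(\dot G)$: given a dense open $D\subseteq 2^\omega$ coded in $V$, the set $E_D=\{q\le p_0:q\Vdash\psi(\dot G)\notin D\}$ is analytic, and the branching assumption together with the analyticity of the forcing relation forces $E_D$ to be nowhere dense below $p_0$. Shoenfield absoluteness applied to the analytic statement ``$E_D$ is dense below $p_0$'' lets one diagonalize against every such $D$ in $V$, so that $\psi(G)$ meets every $V$-coded dense open set and is Cohen over $V$.

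The main obstacle is the dichotomy of step two; this is where the Souslin hypothesis earns its keep. Analytic absoluteness is exactly what allows one either to locate the perfect splitting subtree or, failing that, to uniformize the antichain data into a ground-model dominating function. Making the dichotomy rigorous typically uses Shelah's combinatorics of sweet forcings (or an equivalent device for converting analytic non-capturing into genuine splitting), and isolating this combinatorial statement and proving it is the technical core of the argument.
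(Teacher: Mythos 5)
There is a genuine gap, and it sits exactly where you acknowledge it: your ``step two'' dichotomy is the entire content of Shelah's theorem, and you neither prove it nor state it in a form that could be proved. The two alternatives you offer are not negations of one another: the failure of a perfect, coherently compatible, fast-growing subtree of refinements of the matrix $(q_{n,k})$ does not imply that, for each $n$, the values of $\dot f(n)$ realized below every extension of $p_0$ are confined to a finite piece of $A_n$. Compatibility in a ccc poset is not directed or transitive, so levelwise infiniteness of realized values (which is all that unboundedness plus ccc readily gives you) is far from enough to build a perfect tree of pairwise compatible conditions; conversely, the negation of that tree-building gives no uniform finiteness from which your dominating $g$ could be extracted. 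This is precisely why the theorem is false for general ccc posets and why the Souslin hypothesis must enter through a genuine combinatorial/absoluteness argument rather than only through ``the forcing relation is analytic.'' The appeal to ``sweet forcings'' is also off target: sweetness is an amalgamation property used in a different part of Shelah's theory of Souslin ccc forcing and is not the device behind this theorem.

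The third step is likewise asserted rather than proved: ``the branching assumption together with analyticity forces $E_D$ to be nowhere dense below $p_0$'' is exactly the point at which unboundedness of $\dot f$ has to be converted into a density argument for the candidate Cohen real, and no such argument is given; the invocation of Shoenfield absoluteness there does not parse, since $E_D$ and its density below $p_0$ are ground-model statements about the poset, and the real work of absoluteness in Souslin forcing arguments is to transfer predensity and compatibility statements between $V$ and its extensions. For comparison, note that the present paper does not prove this theorem at all --- it is quoted from Shelah's 1994 paper --- and the descriptive-set-theoretic content that paper extracts is a dichotomy for analytic subsets of $[\N]^{\infty}$ (boundedness of the enumerating functions versus a perfect almost disjoint subfamily, cf.\ lemma~\ref{l-4} here), proved by a rank/fusion analysis of the countable antichains deciding $\dot f$, not by the capturing-versus-perfect-matrix split you describe. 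As written, your proposal defers the theorem's technical core to an unproven and incorrectly formulated combinatorial statement, so it does not constitute a proof.
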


Thus the Cohen poset embeds into every Souslin ccc poset which is not weakly
distributive, and thus it remains (for a positive answer) to show that every
nontrivial weakly distributive Souslin poset with the ccc adds a random
real. However, the remaining part of Question~\ref{q-1} seems to be considerably
more difficult. Indeed, many consider it a dubious conjecture that 
the collection $\{$Cohen,
random$\}$ does in fact form a basis.  Veli\v ckovi\'c has suggested some test
questions for this conjecture, including the following one from~\cite{MR1903857}.

\begin{question}[Veli\v ckovi\'c]
\label{q-2}
Does every nontrivial Souslin ccc poset add a splitting real\textup{?}
\end{question}

Recall that a real $s\subseteq\N$ is \emph{splitting} over some
model $M$, if both $s$ and its complement intersect every infinite $E\subseteq\N$ in
$M$, i.e.~$s\cap E\ne\emptyset$ and $s^\complement\cap
E\ne\emptyset$. 
Hence adding a splitting real means adding a splitting real over the
ground model. 
This is a good test question because Cohen and random reals are both also splitting reals. 
One can see this by noting that $\{x\subseteq\N:x\subseteq y\}$ and
$\{x\subseteq\N:x\cap y=\emptyset\}$ are both null (for the Haar measure on
$\pN\cong\cantor$) and meager for every infinite $y\subseteq\N$. But this fact also
follows immediately from the more general corollary~\ref{o-1} below.

We shall formulate a general
dichotomy of descriptive set theory for analytic families of trees (dichotomy~\ref{u-1}),
which has a positive answer to question~\ref{q-2} as a straightforward
consequence. Whether or not this dichotomy is true remains
unsolved. The main result of this paper, however, is the surprising
discovery of a deep connection between this dichotomy and Sperner
Theory. In particular, the famous Erd\H os--Ko--Rado Theorem
(cf.~\cite{MR0140419}, but actually proved in 1938) 
on the maximum size of $t$-intersecting families is
relevant. More specifically, in the paper~\cite{Hirs2},
we arrived at a conjecture on the
asymptotic behaviour of the maximum of the product of the size of the
$m$-shades of a pair of cross-$t$-intersecting families. The main
result of this paper is a substantial reduction of the dichotomy to this
reasonable conjecture of extremal combinatorics.\footnote{A previous
  version of this paper (\cite{Hirs}) made a stronger combinatorial
  conjecture, and established the dichotomy as a consequence. However,
  that conjecture was disproved in~\cite{Hirs2}.}

There have been some relevant developments 
since the original version of this paper was written. First of all,
question~\ref{q-2} was solved by Veli\v ckovi\'c in~\cite{MR2166361}, where he
moreover proved the following result:

\begin{thm}[Veli\v ckovi\'c, 2003]
\label{u-4}
$\pstar_{\cntum}$ implies that every nontrivial weakly distributive
ccc poset adds a splitting real.
\end{thm}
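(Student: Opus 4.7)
The plan is to proceed by contradiction. Assume that $P$ is nontrivial, weakly distributive, ccc, and adds no splitting real. By nontriviality, fix $p_0 \in P$ and a $P$-name $\dot x$ for an infinite subset of $\omega$ with $p_0 \forces \dot x \notin V$. The assumption that no splitting real is added means that for every $p \le p_0$ there exist $q \le p$, an infinite $E \in [\omega]^\omega \cap V$, and $\varepsilon \in \{0,1\}$ such that $q \forces |E \cap \dot x^{(\varepsilon)}| < \aleph_0$, where $\dot x^{(0)} := \dot x$ and $\dot x^{(1)} := \omega \setminus \dot x$. In other words, $[\omega]^\omega \cap V$ is a reaping family for $\dot x$.

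Next I would use weak distributivity to trap $\dot x$ in a ground-model slalom. Applied to the name $n \mapsto \dot x \cap I_n$ for a suitable interval partition of $\omega$, weak distributivity produces $q \le p_0$, a partition $(I_n)_{n<\omega}$ in $V$, and a ground-model sequence $(\F_n)_{n<\omega}$ with each $\F_n \subseteq \power(I_n)$ finite, such that $q \forces \dot x \cap I_n \in \F_n$ for every $n$. The potential values of $\dot x$ below $q$ therefore lie along branches of a finitely branching ground-model tree $T \subseteq \prod_n \F_n$, and two conditions extending $q$ that commit to distinct branches of $T$ must, in view of the reaping property, be discriminated by some ground-model infinite $E$.

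The decisive step is to apply $\pstar_{\cntum}$ to manufacture, for each $\alpha < \oone$, a condition $q_\alpha \le q$ together with a branch $b_\alpha \in [T]$ committed to by $q_\alpha$, in such a way that the $b_\alpha$ are pairwise distinct modulo the reaping relation. I would encode slalom commitments paired with finite disjointness requirements, drawn from a $\cntum$-enumeration of $[\omega]^\omega \cap V$, into an auxiliary ccc poset $Q$ of size $\cntum$, and invoke $\pstar_{\cntum}$ to obtain a filter meeting $\oone$ chosen dense sets in $Q$, yielding the desired sequence $\{q_\alpha : \alpha < \oone\}$. The reaping property then converts pairwise distinctness of branches into pairwise incompatibility of the $q_\alpha$ in $P$, contradicting ccc.

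The principal obstacle is the third step: one must verify that the auxiliary poset $Q$ really is ccc (so that $\pstar_{\cntum}$ applies), and that the combinatorial content of $\pstar_{\cntum}$ -- whether expressed via an oscillation function \`a la Todor\v cevi\'c or a chromatic partition calculus -- genuinely separates the $\oone$ resulting branches in a manner compatible with the slalom. This delicate matching between the finite approximations furnished by weak distributivity and the uncountable colouring structure supplied by $\pstar_{\cntum}$ is where the real work of Veli\v ckovi\'c's proof should lie.
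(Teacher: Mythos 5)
First, a point of comparison: the paper does not prove this theorem at all --- it is quoted from Veli\v ckovi\'c's paper \cite{MR2166361} --- so your sketch can only be measured against Veli\v ckovi\'c's actual argument, and it diverges from it at the decisive point. The fatal step is your use of $\pstar_{\cntum}$. As the paper notes, $\pstar_{\cntum}$ is the P-ideal dichotomy on sets of size continuum; it is not a forcing axiom and does not supply filters meeting $\oone$ chosen dense subsets of an auxiliary ccc poset of size $\cntum$. What you invoke in your third step is in effect $\ma_{\omega_1}$, which implies $2^{\aleph_0}>\aleph_1$, whereas the P-ideal dichotomy is consistent with $\ch$; so the principle you actually have cannot deliver what your construction needs. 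The genuine proof has a different architecture: one associates to the weakly distributive ccc poset (or to the tree of possible values of the new real) an ideal $\I$ of countable subsets that ``converge to zero'' in the sense that every condition has an extension compatible with only finitely many of their members; weak distributivity is exactly what is needed to diagonalize and verify that $\I$ is a P-ideal; then the dichotomy is applied, with the ccc ruling out the alternative of an uncountable set all of whose countable subsets lie in $\I$, and the splitting real being extracted from the countable orthogonal decomposition in the other alternative. Nothing in your outline plays the role of this ideal or of the verification of its P-property, which is where weak distributivity and the dichotomy actually interact.

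There is also an internal gap in your third step even if one grants a Martin-Axiom-like principle. You ask for conditions $q_\alpha\le q$ ``committed to'' full branches $b_\alpha\in[T]$ of the slalom tree. Since $\dot x$ is forced to be new, no condition decides $\dot x$ completely, so no condition commits to an entire branch; at best one gets finite approximations, and since $T$ is a countable, finitely branching ground-model tree, one cannot have $\oone$ many conditions committed to pairwise distinct finite stems either. Consequently the intended conversion of ``pairwise distinct branches'' into an uncountable antichain cannot be carried out as described, and the reaping family produced in your first step is never genuinely brought to bear. Your closing admission that the ccc of the auxiliary poset and the separation of the $\oone$ branches are unverified is accurate, but these are not routine verifications to be patched in: together with the misreading of $\pstar_{\cntum}$ they are the substance of Veli\v ckovi\'c's theorem, so the proposal as it stands does not constitute a proof.
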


\noindent $\pstar_{\cntum}$ is the standard $P$-ideal dichotomy on
sets of size continuum (see~e.g. \cite{MR1809418}).  

However, Veli\v ckovi\'c's theorem does not seem to say much on
whether our dichotomy is true. In fact, even if Shelah's question has a
positive answer, it still does not seem to entail our dichotomy.
On the other hand, our approach of extracting a statement of descriptive
set theory is the approach used by Shelah in proving
theorem~\ref{u-5}; this has advantages because the descriptive set
theoretic statement gives an actual description of the splitting real
(specifically, in equation~\eqref{eq:47} below) as opposed to merely
asserting its existence, and the descriptive set theory is of interest
in its own right (e.g.~the descriptive set theory extracted from the
proof of theorem~\ref{u-5} is developed further in~\cite{MR1610563} 
and~\cite{MR1903857}).

The other developments have been on the measure theory front. 
In~\cite{MR2186722} an old problem of von Neumann was settled. From this one
could deduce that a positive answer to the famous Maharam's Conjecture
of abstract measure theory would imply a positive answer to Shelah's
question (question~\ref{q-1}). The most astounding development however
was Talagrand's recent negative solution to Maharam's
Conjecture~\cite{MR2214604}. While Maharam's Conjecture was considered by some
to be the most significant problem in abstract measure theory,
Shelah's question is now the natural place to look for even more
challenging measure theoretic problems in a similar vein. 

\section{A dichotomy for analytic families of trees}
\label{sec:dich-analyt-famil}

We write $\N=\{0,1,\dots\}$ for the nonnegative integers. 
For a parameter $f\in\irrationals$ we let $\tree(f)$ denote the family of all subtrees of
the tree 
\begin{equation}
  \label{eq:34}
  \seq f=\bigcup_{n=0}^\infty\prod_{i=0}^{n-1}\{0,\dots,f(i)-1\}
\end{equation}
of all functions $t$ with domain $\{0,\dots,n-1\}$ for some $n\in\N$, with
$t(i)<f(i)$ for all $i<n$, ordered by inclusion. To avoid trivialities we assume
that $f\ge\underline 2$, i.e.~%
\begin{equation}
  \label{eq:35}
  f(n)\ge 2\espc\text{for all $n$}.
\end{equation}
Its topology is of course obtained by identifying $\tree(f)$ with the product $\{0,1\}^{\seq f}$.
Note that $\seq{\underline 2}$ is the binary tree and is more usually
denoted by $\twoseq$. Indeed, this is the primary case and one will
not lose much by taking $f=\underline 2$ throughout. 

A \emph{branch} through a tree $T$ refers to an infinite downwards
closed chain of~$T$. Recall that by K\"onig's classical tree lemma,
every infinite $T\in\tree(f)$ has an infinite branch. 
We say that $T$ is \emph{infinitely branching} if
$T$ has infinitely many branches. More familiar to set theorists are
\emph{perfect trees} $T$ where every node has an extension to a
splitting node (i.e.~a node with at least two immediate successors); 
thus perfect trees are in particular infinitely branching. 

For a tree $T\in\tree(f)$, we write
$T(n)\subseteq\prod_{i=0}^{n-1}\{0,\dots,\allowbreak f(i)-1\}$ 
for the $n\Th$ level of $T$, which consists of sequences of length
$n$, and with $T(0)$ the singleton consisting of the empty sequence
whenever $T\ne\emptyset$. And we write $T\restriction n$ for the
subtree consisting of the first $n$ levels $T(0),\dots,T(n-1)$ of $T$. 
For $\epsilon=0,1$, a subset $A\subseteq \seq f$ is called
\emph{$\epsilon$-homogeneous} for some colouring $c:\seq f\to\two$,
if $c(s)=\epsilon$ for all $s\in A$. We say that $A$ is
\emph{homogeneous} for $c$ if $c$ is constant on~$A$, i.e.~iff $A$ is
either $0$-homogeneous or $1$-homogeneous for $c$; $A$ is
\emph{nonhomogeneous} for $c$ if it is not homogeneous. 

For a family of trees $\A\subseteq\tree(f)$ and some tree
$T\in\tree(f)$, we denote $\A_T=\{S\in\A:S\subseteq T\}$. 

\begin{note}
\label{note:1}
It is important to note that we use the set theoretic terminology for
antichains of a poset. I.e.~if $(P,\le)$ is a poset, then $A\subseteq P$ 
is an \emph{antichain} iff $a$ is \emph{incompatible} with $b$,
written $a\perp b$, for all $a\ne b$ in $A$, where $a\perp b$ means
that there is no $p\in P$ satisfying both $p\le a$ and $p\le b$. 

We emphasize this because in extremal combinatorics (e.g.~\cite{MR2003b:05001})
an antichain refers instead to the weaker concept of 
a pairwise \emph{incomparable} subset. These are also called
\emph{Sperner families}, and we shall use the latter terminology.
\end{note}

\begin{emp}
\label{u-1}
Every analytic family $\A$ of infinitely branching subtrees of $\seq f$
satisfies at least one of the following\textup:
\begin{enumerate}
\item\label{item:2} There exists a colouring $c:\seq f\to\two$ and a
  $T\in\A$ such
  that $S(n)$ is homogeneous for $c$ for at most finitely many
  $n\in\N$, for every $S\in\A_T$.
\item\label{item:1} The poset $(\A,\subseteq)$ has an uncountable antichain.
\end{enumerate}
\end{emp}

\begin{remark}
\label{r-2}
In a previous version (\cite{Hirs}) we had a stronger dichotomy where
there was no restriction to $T$ in the first
alternative~\eqref{item:2}, i.e.~$c$ is nonhomogeneous for every
$T\in\A$. We think that the latter stronger dichotomy is entailed by
the former; however, we wish to avoid the additional complications here.
\end{remark}

\begin{remark}
\label{r-1}
We have not yet considered
 what happens when we remove the restriction that the subtrees 
be infinitely branching. 
It is obvious that dichotomy~\ref{u-1} is false if we allow subtrees
with only one branch, and we conjecture that it is still false
even if we do not allow this triviality. 
\end{remark}

Let us see how this yields a splitting real. 

\begin{lem}
\label{o-2}
Dichotomy~\textup{\ref{u-1}} implies that every nontrivial Souslin poset with the ccc adds a splitting real.
\end{lem}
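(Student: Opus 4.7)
The plan is to associate to $P$ an analytic family of infinitely branching subtrees of $\twoseq = \seq{\underline{2}}$ and apply Dichotomy~\ref{u-1}. Since $P$ is a nontrivial ccc poset, fix a condition $p_0 \in P$ and a canonical $P$-name $\dot x$ for a new element of $\cantor$, so that $p_0 \Vdash \dot x \notin V$. For each $p \leq p_0$ put
\begin{equation*}
T_p \;=\; \{s \in \twoseq : (\exists q \leq p)\ q \Vdash \dot x \restriction |s| = s\},
\end{equation*}
and set $\A = \{T_p : p \leq p_0\}$. The definition immediately makes $p \mapsto T_p$ order-preserving into $(\tree(\underline{2}), \subseteq)$. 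Since $\dot x$ is forced to be total, every node of $T_p$ has at least one immediate successor in $T_p$, so $T_p$ is pruned; and if $T_p$ had only finitely many branches they would all lie in $V$, forcing $\dot x \in V$ contrary to $p \leq p_0$. Hence every $T_p$ is infinitely branching. With some care in the choice of the canonical name $\dot x$, the family $\A$ is analytic, using analyticity of $P$, of $\leq$, and of the forcing relation restricted to $\Sigma^0_1$-statements about $\dot x$.

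Apply Dichotomy~\ref{u-1} to $\A$. If alternative~\ref{item:1} holds, fix an uncountable antichain $\{T_{p_\alpha}\}_{\alpha<\omega_1}$ in $(\A, \subseteq)$. Any $r \in P$ with $r \leq p_\alpha, p_\beta$ would yield $T_r \subseteq T_{p_\alpha} \cap T_{p_\beta}$ in $\A$ by order-preservation, contradicting antichain-ness. Hence $\{p_\alpha\}$ is itself an uncountable antichain in $P$, contradicting the ccc.

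Thus alternative~\ref{item:2} holds, giving a colouring $c: \twoseq \to \two$ and some $p_1 \leq p_0$ such that, writing $T = T_{p_1}$, the levels $S(n)$ are $c$-homogeneous for only finitely many $n$, for every $S \in \A_T$. Replacing $p_0$ by $p_1$, define
\begin{equation*}
\dot s \;=\; \{n \in \N : c(\dot x \restriction n) = 1\}.
\end{equation*}
To see $p_0 \Vdash \dot s$ is splitting over $V$, suppose toward contradiction that some $p \leq p_0$ forces the contrary. A routine ccc refinement yields an infinite $E \in V$ and a fixed $\epsilon \in \two$ with $p \Vdash c(\dot x \restriction n) = \epsilon$ for every $n \in E$. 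Every branch $x$ of $T_p$ then satisfies $c(x \restriction n) = \epsilon$ for all $n \in E$; since $T_p$ is pruned this forces $T_p(n) \subseteq c^{-1}(\epsilon)$ for every $n \in E$. Hence $T_p(n)$ is $\epsilon$-homogeneous, in particular homogeneous, for infinitely many $n$, which contradicts alternative~\ref{item:2} applied to $T_p \in \A_T$.

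The most delicate point will be verifying the analyticity of $\A$: one must select $\dot x$ so that $q \Vdash \dot x \restriction |s| = s$ is analytic in $q$ for each fixed $s$, and then parameterize $\A$ analytically rather than through its naively $\Pi^1_1$ graph $\{(p, T) : T = T_p\}$. The rest of the argument is driven entirely by the translation of $(P,\leq, \perp)$ into $(\A, \subseteq)$ and by the fact that an $\epsilon$-homogeneous level is already homogeneous, which is what lets the ``non-splitting'' witness $E$ be upgraded into the infinitely-many-homogeneous-levels hypothesis excluded by the dichotomy.
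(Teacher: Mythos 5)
Your proposal follows essentially the same route as the paper's own proof: the same tree of possibilities $T_p=\{s:\exists q\le p\ q$ forces $\dot x\restriction|s|=s\}$, the same transfer of an uncountable antichain in $(\A,\subseteq)$ back to an uncountable antichain in the poset via monotonicity of $p\mapsto T_p$, the same name $\dot s=\{n: c(\dot x\restriction n)=1\}$, and your final step is just the contrapositive of the paper's: the paper fixes $p$ and infinite $E$, takes a nonhomogeneous level $T_p(n)$ with $n\in E$, and produces $q_0,q_1\le p$ pushing $n$ out of and into $\dot s$, whereas you assume a condition forcing non-splitting and read off infinitely many homogeneous levels. Your explicit restriction to conditions below $p_1$ with $T=T_{p_1}$ is exactly how the relativization to $\A_T$ in the first alternative has to be used, and is if anything more careful than the paper's wording.

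One step is justified incorrectly as written: you take a name $\dot x$ for a new real ``since $P$ is a nontrivial ccc poset.'' Nontrivial ccc alone does not yield a new real (a Souslin tree is atomless, ccc, and adds no reals), so the Souslin hypothesis is essential precisely here; the paper invokes the known fact that a nontrivial Souslin ccc poset adds a new real, provable by absoluteness together with the construction of a Souslin tree from any atomless ccc poset that adds no reals. Your write-up needs that citation or argument rather than treating the name as immediately available. Two smaller remarks: the homogeneity of the levels of $T_p$ is cleanest argued node-by-node (for $t\in T_p(n)$ with $n\in E$, pick $q\le p$ forcing $\dot x\restriction n=\check t$; then $q$ forces $c(\check t)=\epsilon$, and since $c$ and $t$ lie in the ground model, $c(t)=\epsilon$), which avoids the detour through branches of $T_p$, whose colour you in any case only know via this node-level computation; and the analyticity of $\A$, which you rightly flag as the delicate point, is exactly what the paper disposes of by citing that the ccc allows maximal antichains (and hence the forcing relation for statements of the form ``$q$ forces $\check t\subseteq\dot x$'') to be coded by reals.
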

\begin{proof}
Let $\P$ be a nontrivial Souslin poset with the ccc. It is known that $\P$ must add a new
real; this is proved in~\cite{MR1303493}, and also follows from the fact that this statement
is absolute, since one can construct a Souslin tree from any nontrivial poset
with the ccc which does not add a real (see~\cite{MR1832454}). 
Thus we can find a $\P$-name $\dot r$ for a new real in $\cantor$. For each
$p\in\P$, we let $T_p\in\tree(\underline 2)$ be the tree of possibilities for $\dot r$
which is defined by
\begin{equation}
  \label{eq:38}
  T_p=\bigl\{t\in\twoseq:q\forces \check t\subseteq\dot r\text{ for some }q\le p\bigr\}.
\end{equation}
The family $\A=\{T_p:p\in\P\}$ is an analytic subset of $\twoseq$, because $\P$ is
Souslin and since the ccc property allows for maximal antichains to be described as
reals (see~\cite{MR1303493}, or~\cite{MR1610563}). Since $\dot r$ names a new real, each $T_p$ is a
perfect tree, and thus in particular has infinitely many branches. And since
$q\le p$ implies $T_q\subseteq T_p$,  if $T_p$ and $T_q$ are incompatible in the
poset $(\A,\subseteq)$, then $p$ and~$q$ are incompatible in $\P$. 
Since the second condition~\eqref{item:1} entails the
existence of an uncountable antichain in $(\A,\subseteq)$, by the ccc
dichotomy~\ref{u-1} (with $f=\underline2$) yields a colouring
$c:\twoseq\to\two$ as in condition~\eqref{item:2}. 

Define a $\P$-name $\dot s$ for a real in $\pN$ by
\begin{equation}
  \label{eq:47}
  n\in \dot s\Iff c(\dot r\restriction n)=1.
\end{equation}
Then $\dot s$ names a splitting real, because for every $p\in\P$ and every infinite
$E\subseteq\N$, we can find an $n\in E$ such that $T_p(n)$ is nonhomogeneous
for $c$, and thus there are $t_0$ and $t_1$ in $T_p(n)$ such that 
\begin{equation}
  \label{eq:50}
  c(t_0)=0\And c(t_1)=1.
\end{equation}
And there are $q_0,q_1\le p$ forcing that $\dot r\restriction n=t_0$ 
and $\dot r\restriction n=t_1$, respectively. 
Thus $q_0\forces(\N\setminus\dot
s)\cap E\ne\emptyset$ and $q_1\forces\dot s\cap E\ne\emptyset$.
\end{proof}

\subsection{Generic splitting reals}
\label{sec:gener-splitt-reals}

Every $f\in\irrationals$ ($f\ge\underline 2$) determines a space of reals 
\begin{equation}
  \label{eq:21}
  \real(f)=\prod_{n=0}^\infty\{0,\dots,f(n)-1\}.
\end{equation}
Note that for $T\in\tree(f)$, $[T]\subseteq\real(f)$ is a closed subset in the
product topology, where $[T]$ is the set of all reals corresponding to a branch
through $T$ (i.e.~$x\restriction n\in T$ for all $n$). 
By a \emph{perfect} subset of a topological space, we mean a nonempty closed set
with no isolated points. Thus $[T]$ is perfect whenever $T$ is a
perfect tree. 

The natural measure on $\real(f)$ is the Haar probability measure $\mu_f$ determined by the
group operation on $\real(f)$ of coordinatewise addition modulo $f(n)$. Notice that
\begin{equation}
  \label{eq:22}
  \mu_f([T])=\lim_{n\to\infty}\nu_{fT}(n)
\end{equation}
where $\nu_{fT}(0)\ge \nu_{fT}(1)\ge\cdots$ is the sequence 
\begin{equation}
\label{eq:16}
\nu_{fT}(n)=\frac{|T(n)|}{\prod_{i=0}^{n-1}f(i)}.
\end{equation}

\begin{lem}
\label{l-5}
For all $f\in\irrationals$ \textup($f\ge\underline 2$\textup), there exists a colouring
$c:\seq f\to\two$ such that for every $T\in\tree(f)$ at least one of the following holds.
\begin{enumerate}
\item There are only finitely many $n$ for which $T(n)$ is homogeneous for $c$.
\item $\mu_f([T])=0$.
\end{enumerate}
\end{lem}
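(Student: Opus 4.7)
The plan is to give an explicit construction: define $c\colon \seq f \to \two$ by $c(s) := s_{|s|-1} \bmod 2$ (with $c(\emptyset) := 0$, say). This coloring sees only the parity of the last coordinate of $s$. Since $f(i) \geq 2$ for every $i$, both colors appear at every level, and the number of values $v \in \{0,\ldots,f(i)-1\}$ congruent to a fixed parity mod~$2$ is at most $\lceil f(i)/2 \rceil$.

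I will verify the contrapositive: if $T \in \tree(f)$ has infinitely many levels homogeneous for $c$, then $\mu_f([T]) = 0$. Enumerate these levels $n_1 < n_2 < \cdots$ and let $\epsilon_k \in \two$ be the common value of $c$ on $T(n_k)$, so every $s \in T(n_k)$ satisfies $s_{n_k - 1} \equiv \epsilon_k \pmod 2$. Downward closure of $T$ propagates this constraint: whenever $m \geq n_k$ and $s \in T(m)$, the restriction $s \restriction n_k$ lies in $T(n_k)$, forcing $s_{n_k-1} \equiv \epsilon_k \pmod 2$. Consequently, at level $n_k$ the constraints inherited from $n_1, \dots, n_k$ pin down $k$ distinct coordinates---namely $n_1 - 1, \dots, n_k - 1$---into prescribed parity classes, so
\[
\nu_{fT}(n_k) \;\leq\; \prod_{j=1}^{k} \frac{\lceil f(n_j - 1)/2 \rceil}{f(n_j - 1)} \;\leq\; (2/3)^k,
\]
using the elementary fact that $\lceil m/2 \rceil / m \leq 2/3$ for every integer $m \geq 2$ (with equality at $m = 3$). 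Since $\nu_{fT}$ is monotone nonincreasing with limit $\mu_f([T])$, taking $k \to \infty$ yields $\mu_f([T]) = 0$.

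There is no real obstacle: the argument hinges on $c$ depending on a single coordinate per level, so that homogeneity at distinct levels imposes constraints on \emph{disjoint} coordinates, whose density penalties multiply. No randomization or appeal to extremal combinatorics is required; the restriction $f \geq \underline{2}$ enters only to guarantee that each per-coordinate constraint loses at least a factor bounded away from $1$.
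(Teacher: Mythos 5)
Your proof is correct and is essentially the paper's argument: the paper also colours by the last coordinate (splitting $\{0,\dots,f(|s|-1)-1\}$ into a lower and an upper half rather than by parity), and derives the same $\bigl(\tfrac23\bigr)^{k}$ bound on $\nu_{fT}$ at levels constrained by $k$ homogeneous levels, whence $\mu_f([T])=0$. The only difference is the cosmetic choice of half-splitting versus parity, which yields the identical density estimate $\lceil f(i)/2\rceil/f(i)\le\tfrac23$ per constrained coordinate.
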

\begin{proof}
Define a colouring $c:\seq f\to\two$ by
\begin{equation}
  \label{eq:18}
  c(s)=0\Iff s(|s|-1)<\left\lfloor \frac{f(|s|-1)}2\right\rfloor,
\end{equation}
where $\lfloor x\rfloor$ is the largest integer $\le x$. Observe that if $T(n)$ is
homogeneous for $c$ (in either colour) for all $n$ in some finite set
$F$, then setting $n=\max(F)+1$, $\mu_f([T])\le\frac{|T(n)|}{\prod_{i=0}^{n-1}f(i)}\le
\frac{\prod_{i\in F}\lfloor\frac{f(i)+1}2\rfloor\cdot\prod_{i\notin
    F}f(i)}{\prod_{i=0}^{n-1}f(i)}\le(\frac23)^{|F|}$.
\end{proof}

We say that a subfamily $\A\subseteq\tree(f)$ \emph{separates points} if
every~$T\in\A$ and every~$x,y\in[T]$ has a $U\subseteq T$ in $\A$ such
that at most one of $x$ and $y$ is in $[U]$. It is clear that if $\A$
is a nonempty family of subtrees of $\seq f$ that separates points,
then the poset $(\A,\subseteq)$ forces a generic real in $\real(f)$. 

Obviously we are identifying $\real(\underline2)=\cantor$ 
with $\pN$ in corollary~\ref{o-1}.

\begin{cor}
\label{o-1}
If $\A\subseteq\tree(\underline 2)$ is a nonempty family that
separates points which has the property that $\mu([T])\ne0$ for all
$T\in\A$, then the generic real of the poset $(\A,\subseteq)$ is a
splitting real.
\end{cor}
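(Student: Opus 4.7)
The plan is to combine Lemma~\ref{l-5} with the separating-points hypothesis in a manner that mirrors the proof of Lemma~\ref{o-2}. By Lemma~\ref{l-5} applied with $f=\underline 2$, fix a colouring $c:\twoseq\to\two$ (for instance $c(s)=s(|s|-1)$ as in the proof of Lemma~\ref{l-5}) such that every $T\in\tree(\underline 2)$ with $\mu([T])\ne 0$ has only finitely many $c$-homogeneous levels. Since $\mu([T])\ne 0$ for every $T\in\A$, cofinitely many levels of each such $T$ are nonhomogeneous for~$c$.

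Let $\dot r$ denote the generic real of $(\A,\subseteq)$, identified with a subset of $\N$ via $\cantor\cong\pN$; it is well-defined as the unique element of $\bigcap_{T\in G}[T]$ thanks to the separating-points hypothesis and compactness of $\cantor$, as remarked just before the corollary. Define $\dot s$ by equation~\eqref{eq:47}. With our choice of $c$ we have $n\in\dot s\iff\dot r(n-1)=1$, so $\dot s$ and $\dot r$ determine each other by a shift of coordinates, and it therefore suffices to show that $\dot s$ is splitting over the ground model.

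Mimicking the proof of Lemma~\ref{o-2}, fix an infinite $E\in V$ and any $T\in\A$. Choose $n\in E$ larger than all the finitely many $c$-homogeneous levels of $T$; then $T(n)$ is nonhomogeneous, so contains nodes $t_0,t_1$ with $c(t_\epsilon)=\epsilon$. The aim is to find $U_0,U_1\le T$ in $\A$ forcing $\dot r\restriction n=t_0$ and $\dot r\restriction n=t_1$ respectively; the reasoning around~\eqref{eq:50} in the proof of Lemma~\ref{o-2} then yields $U_0\forces(\N\setminus\dot s)\cap E\ne\emptyset$ and $U_1\forces\dot s\cap E\ne\emptyset$, establishing the density of the relevant sets of conditions and hence the splitting property of $\dot s$.

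The main obstacle is producing such $U_\epsilon\in\A$. In Lemma~\ref{o-2} this was automatic, since by the very definition of the possibility tree $T_p$ each $t\in T_p(n)$ is forced to equal $\dot r\restriction n$ by some $q\le p$. In the present setting we have only the pairwise separation of branches of $[T]$ guaranteed by the separating-points hypothesis. The delicate step is to upgrade this, using the positive-measure requirement $\mu([U])>0$ for conditions $U\in\A$, into a single condition $U_\epsilon\in\A$ below $T$ with $[U_\epsilon]\subseteq[t_\epsilon]$, which amounts to pruning from $[T]$ every branch whose level-$n$ restriction differs from $t_\epsilon$ while remaining in $\A$.
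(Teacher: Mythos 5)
Your reduction is the same as the paper's: the colouring of Lemma~\ref{l-5} (for $f=\underline 2$ it is $c(s)=s(|s|-1)$), the identity~\eqref{eq:10} relating the real $\dot s$ of~\eqref{eq:47} to the generic $\dot r$, and the plan to rerun the second paragraph of the proof of Lemma~\ref{o-2}. But your write-up stops exactly at the step that needs proving: you never produce the conditions $U_0,U_1$; you only announce that the ``delicate step'' is to find, for nodes $t_0,t_1$ fixed in advance, some $U_\epsilon\subseteq T$ in $\A$ with $[U_\epsilon]\subseteq[t_\epsilon]$. That is a genuine gap, and moreover the step in the form you state it cannot be carried out from the hypotheses. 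Separating points only discards branches a pair at a time; it gives no way to remove the (typically continuum many) branches of $T$ missing a prescribed node while staying inside $\A$. Concretely, let $\A$ be the family of all positive-measure subtrees of $\twoseq$ (it separates points and every member has positive measure), and let $T$ be the tree whose branches are all $x$ with $x(0)=0$ together with the single branch $y=111\cdots$. Level $1$ of $T$ is nonhomogeneous and its only colour-$1$ node is $t_1=\<1\>$, but the branches of $T$ through $t_1$ form a null set, so no member of $\A$ below $T$ is contained in $[t_1]$; in fact no condition below $T$ even forces $\dot r\restriction 1=t_1$, since $[T]\cap[t_1]=\{y\}$ and that would force the (random) generic real to equal the ground-model real $y$. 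So neither your strong requirement $[U_\epsilon]\subseteq[t_\epsilon]$ nor the weaker ``some $U_\epsilon$ forces $\dot r\restriction n=t_\epsilon$'' is available once $t_\epsilon$ has been chosen first.

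What the argument actually needs is weaker and has the opposite quantifier order: for every $T\in\A$ and every infinite ground-model $E$ it suffices that some condition below $T$ forces $\dot s\cap E\ne\emptyset$ and some condition below $T$ forces $(\N\setminus\dot s)\cap E\ne\emptyset$; here one may choose the level $n\in E$ and the node(s) after inspecting $T$ (in the example above, pick $n\in E$ for which the branches of $T$ with $(n-1)$-st coordinate $1$ carry positive measure --- such $n$ exists by countable additivity --- and take $U_1$ to be the corresponding subtree, whose $n$-th level is $1$-homogeneous; similarly for colour $0$). To be fair, the paper is terse at exactly this point: it merely asserts that Lemma~\ref{l-5} lets the second paragraph of the proof of Lemma~\ref{o-2} go through, whereas in that paragraph the existence of $q_0,q_1$ was automatic from the definition~\eqref{eq:38} of the possibility tree $T_p$, which has no analogue here. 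So a complete proof does require an argument that you have not supplied; but it must be the flexible density statement just described, not the prescribed-node pruning you propose, which already fails for the family of all positive-measure subtrees of $\twoseq$.
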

\begin{proof}
Let $c:\twoseq\to\two$ be the colouring from~\eqref{eq:18}. Notice that if $\dot r$
is an $\A$-name for the generic real then, the real $\dot s$
defined in~\eqref{eq:47} satisfies
\begin{equation}
  \label{eq:10}
  n\in\dot s\Iff \dot r(n-1)=1.
\end{equation}
And by lemma~\ref{l-5} the second paragraph
of the proof of lemma~\ref{o-2} shows that $\dot s$ names a splitting real. It
follows that $\dot r$ which is a shift of $\dot s$ is also a splitting real.
\end{proof}

\section{Decay of branching}
\label{sec:domin-with-math}

The next dichotomy focuses on a lower bound for the size of the levels
rather than nonhomogeneity. 

\renewcommand{\irrational}{[\N]^\infty}

\begin{lem}
\label{u-2}
For every analytic $\A\subseteq\tree(f)$ consisting of infinitely
branching trees, at least one of the following holds\textup:
\begin{enumerate}
\item\label{item:15} There is an $h\in\irrationals$ 
with $\lim_{n\to\infty}h(n)=\infty$ such that $|T(n)|\ge h(n)$ for all but finitely many $n$, for all $T\in\A$.
\item\label{item:4} There is a perfect subset $\B\subseteq\A$ such
  that $T\cap U$ has only finitely many branches for all $T\ne U$ in $\B$.
\end{enumerate}
\end{lem}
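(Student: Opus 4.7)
The plan is to prove the contrapositive: assuming~(1) fails, construct a perfect $\B\subseteq\A$ witnessing~(2). A preliminary observation handles the countable case. If $\A=\{T_k:k\in\N\}$, then setting $g_T(n)=|T(n)|$ we have $g_{T_k}\to\infty$ for each $k$ (by infinite branching), and a standard diagonalization produces $h\in\irrationals$ with $\lim h=\infty$ and $h\le^* g_{T_k}$ for every $k$, which is~(1). So assume $\A$ is uncountable and~(1) fails. Fix a continuous surjection $\phi:\irrationals\to\A$. The failure of~(1), combined with the fact that $\min(h,\min_{S\in F}g_S)\to\infty$ for any finite $F\subseteq\A$, sharpens to: for every $h\to\infty$ and every finite $F\subseteq\A$ some $T\in\A\setminus F$ satisfies $g_T(n)<h(n)$ infinitely often.

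The main step is a fusion construction producing $(\xi_s,T_s)_{s\in\twoseq}$ with $T_s=\phi(\xi_s)\in\A$ and levels $N_0<N_1<\cdots\to\infty$, subject to:
\begin{enumerate}
\item[(i)] $\xi_{s\ext i}$ extends $\xi_s\restriction m_s$ for $m_s$ large enough (using continuity of~$\phi$) that $T_{s\ext i}\restriction N_{|s|+1}=T_s\restriction N_{|s|+1}$; hence for every $x\in\cantor$, $\xi_x:=\lim_k\xi_{x\restriction k}\in\irrationals$, $T_x:=\phi(\xi_x)\in\A$, and $T_x\restriction N_{k+1}=T_{x\restriction k}\restriction N_{k+1}$;
\item[(ii)] the children $T_{s\ext 0},T_{s\ext 1}$ are chosen inside the restricted slice $\A_s:=\{T\in\A:T\restriction N_{|s|+1}=T_s\restriction N_{|s|+1}\}$ so as to have disjoint $N_{|s|+1}$-levels: $T_{s\ext 0}(N_{|s|+1})\cap T_{s\ext 1}(N_{|s|+1})=\emptyset$.
\end{enumerate}
Granted (i) and (ii), disjointness propagates upward: for any pair $s\ne t$ in $2^k$ and every $m\ge N_k$, $T_{s'}(m)\cap T_{t'}(m)=\emptyset$ for all $s'\supseteq s, t'\supseteq t$, since any common element would restrict to a point in the empty intersection at level~$N_k$. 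In particular, for $x\ne y$ in $\cantor$ with first difference at position $k_0$, $T_x(m)\cap T_y(m)=\emptyset$ for every $m\ge N_{k_0+1}$, so $T_x\cap T_y$ is contained in the first $N_{k_0+1}$ levels of $\seq f$; being finite as a tree, it has no infinite branches at all, so $|[T_x]\cap[T_y]|=0<\infty$, which is~(2).

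The hard step is~(ii): finding two trees inside the restricted slice $\A_s$ with disjoint $N_{|s|+1}$-levels. Raw $\neg$(1) supplies only one thin tree at a time, and without initial-segment control; here one must work inside $\A_s$ and moreover arrange disjointness at the splitting level. The key lemma to establish is that $\A_s$ itself satisfies $\neg$(1) (so thin trees exist within the slice): since the initial-segment condition is clopen in $\tree(f)$, $\A_s$ is analytic, and a Baire-category (or Kuratowski--Ulam) argument on the preimage $\phi\inv(\A_s)\subseteq\irrationals$ combined with continuity of~$\phi$ should transfer $\neg$(1) from $\A$ down to~$\A_s$. Once thin trees inside $\A_s$ are available, the disjoint-level split is realized by picking $T_{s\ext 0}\in\A_s$ with $|T_{s\ext 0}(N_{|s|+1})|$ so small that substantial room remains among the extensions of $T_s(N_{|s|+1}-1)$, then applying $\neg$(1) on $\A_s$ again with $T_{s\ext 0}$ added to the forbidden set and $h$ redefined to force $T_{s\ext 1}$'s level into the complement of $T_{s\ext 0}(N_{|s|+1})$. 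The hypothesis that every $T\in\A$ is infinitely branching is indispensable throughout: it makes the countable-case diagonalization go through, forces $|T_x(n)|\to\infty$ so the limits $T_x$ lie properly in~$\A$, and creates the very tension between infinite branching and the disjoint splits that the thin-tree selection must resolve.
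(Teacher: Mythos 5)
Your overall architecture --- a fusion indexed by $\twoseq$ in which disjointness at a single level propagates upward, yielding a perfect subfamily with pairwise finite intersections --- is sound, and the countable case and the finite-forbidden-set strengthening of $\neg$(1) are fine. But the proof breaks down at exactly the two places you yourself flag as the hard steps, and the sketched fixes do not work. The ``key lemma'' that the slice $\A_s$ again satisfies $\neg$(1) is false for arbitrary slices: take $\A=\A_0\cup\A_1$ where every tree in $\A_0$ restricts to the same finite tree $t_1$ on the first $N$ levels and $\A_0$ satisfies (1), while $\A_1$ consists of thin trees refining a different finite tree $t_0$ and is what makes (1) fail; then the slice determined by $t_1$ satisfies (1), so if $T_\emptyset$ (or any later $T_s$) lands there the construction halts. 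Hence the slices must be steered toward the thin part of $\A$, and --- worse --- at every node you need thinness to survive into \emph{both} children slices simultaneously, and to be realized by trees of the form $\phi(\xi)$ with $\xi\supseteq\xi_s\restriction m_s$, which may be a proper subset of $\A_s$ since $\phi$ is only a continuous surjection. An appeal to Baire category or Kuratowski--Ulam cannot supply this: $\neg$(1) is a combinatorial unboundedness statement, not a comeagerness statement, and organizing its persistence through a perfect tree of slices is precisely where the content of the lemma lives.

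Second, even granting $\neg$(1) on a slice, your device for the split is wrong: the function $h$ in $\neg$(1) constrains only the cardinalities $|T(n)|$, never the position of the nodes, so ``redefining $h$ to force $T_{s\ext 1}$'s level into the complement of $T_{s\ext 0}(N_{|s|+1})$'' is not something $\neg$(1) can deliver --- all thin trees in the slice might contain the leftmost branch above $T_s\restriction N_{|s|+1}$, in which case no two of them have disjoint $N_{|s|+1}$-levels. You would need a genuinely new counting argument here, or you should aim at the weaker but sufficient goal that $|(T_x\cap T_y)(n)|$ stays bounded for infinitely many $n$, which already caps the number of branches of the intersection; neither is carried out. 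For comparison, the paper does not prove the lemma by a direct construction at all: it deduces it from lemma~\ref{l-4} (for analytic $\X\subseteq[\N]^\infty$, either the enumerating functions are $\lefnt$-bounded or there is a perfect almost disjoint subfamily), a dichotomy essentially implicit in Shelah's work and explicit in Veli\v ckovi\'c's, with the deduction written out in the earlier preprint; a self-contained proof along your lines would have to reproduce that kind of argument at the very steps your proposal currently waves through.
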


Note that the condition~\eqref{item:4} implies that the poset $(\A,\subseteq)$ has
an antichain of size continuum. 
Lemma~\ref{u-2} is essentially proved in~\cite{MR1303493}. It also
follows immediately from the following closely related dichotomy,
which is, essentially, implicit in~\cite{MR1303493}, and is explicit,
with a different proof, in~\cite{MR1903857}. 
The deduction of lemma~\ref{u-2} from lemma~\ref{l-4} is spelled out
in~\cite{Hirs}.
Note that $\irrational$ is the family of all
infinite subsets of $\N$, and $f\lefnt g$ if $f(n)\le g(n)$ for all but finitely
many $n$. For an infinite
$x\subseteq\N$ we write $e_x\in\irrationals$ for the
strictly increasing enumeration of $x$ starting with $e_x(0)=\min(x)$.

\begin{lem}
\label{l-4}
For every analytic subset $\X\subseteq\irrational$, at least one of the following
holds\textup:
\begin{enumerate}
\item The family $\{e_x:x\in\X\}$ is bounded in $(\irrationals,\lefnt)$.
\item There is a perfect subset of $\X$ consisting of pairwise almost disjoint sets.
\end{enumerate}
\end{lem}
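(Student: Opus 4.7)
The plan is to prove Lemma~\ref{l-4} by contrapositive: assuming $\{e_x:x\in\X\}$ is unbounded in $(\irr,\lefnt)$, I construct a perfect almost disjoint family inside $\X$ by a fusion.

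First I would represent $\X$ as the projection of the branches of a tree $T$ on $\N\times\N$: some $x\in[\N]^\infty$ lies in $\X$ iff there exists $y\in\irr$ with $(e_x\restriction n,\hsp y\restriction n)\in T$ for every $n$. For any $(\sigma,\tau)\in T$ with $|\sigma|=|\tau|=n$ and $\sigma$ strictly increasing, let $\X_{\sigma,\tau}$ be the set of $x\in\X$ witnessed through $(\sigma,\tau)$, and call $(\sigma,\tau)$ \emph{big} if the set of tails $\{e_x\restriction[n,\infty):x\in\X_{\sigma,\tau}\}$ is unbounded in~$\lefnt$. By hypothesis the empty node is big, and the basic observation driving the argument is that a countable union of $\lefnt$-bounded subsets of $\irr$ is again $\lefnt$-bounded (take $g(k)=\max\{g_i(k):i\le k\}$).

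Next I would establish the structural facts about the tree of big nodes. (a)~Every big $(\sigma,\tau)$ has a big immediate extension, for otherwise the countable partition $\X_{\sigma,\tau}=\bigcup_{(a,b)}\X_{\sigma\ext a,\tau\ext b}$ would exhibit $\X_{\sigma,\tau}$ as bounded. (b)~Above every big node there is a \emph{splitting} big node with at least two big children; otherwise one obtains an infinite chain $(\sigma_i,\tau_i)$ of big nodes with uniquely big children, whose limit is a single $x^*\in\X$, and the remaining mass $\X_{\sigma_0,\tau_0}\setminus\{x^*\}$ is a countable union of bounded pieces, contradicting bigness of $(\sigma_0,\tau_0)$.

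Then I carry out the fusion, building $(\sigma_s,\tau_s)\in T$ big for each $s\in\twoseq$, monotone in $s$, with a splitting big node chosen between stages. To ensure almost disjointness, at each splitting step I arrange that the new entries along the left and right extensions land in disjoint infinite parts of $\N$ and all exceed some threshold $N_s$ with $N_s\to\infty$. Concretely, before taking the split at a node $s$, I fix an infinite partition $\N=L_s\djun R_s$ (both unbounded) and extend below the splitting node many times, exploiting (a) and (b), until the tree of big descendants is rich enough to realize the disjointness constraint on the next split. Then for each $u\in\cantor$, $x_u:=\bigcup_n\ran(\sigma_{u\restriction n})\in\X$, the map $u\mapsto x_u$ is a continuous injection onto a perfect subset of $\X$, and for $u\ne v$ with longest common initial segment $s$, $x_u\cap x_v\subseteq\ran(\sigma_s)\cup(L_s\cap R_s)=\ran(\sigma_s)$, which is finite.

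The main obstacle is the disjointness constraint at splits: merely having two big children at a splitting node does not make those children's new values land in prescribed disjoint infinite subsets of $\N$. Overcoming this will require iterating (a) and (b) enough times before declaring a split, so that the big descendants form a tree with sufficient lateral spread for a pigeonhole (or a direct application of the bigness structure to the restriction tree $T\cap(L_s\cup\ran(\sigma_s))$ versus $T\cap(R_s\cup\ran(\sigma_s))$) to produce the required pair of disjoint-range big extensions. This is the delicate combinatorial core of the fusion and is where the analytic (rather than arbitrary) nature of $\X$ ultimately gets used, via the preservation of bigness through the recursion.
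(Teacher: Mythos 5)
Your preliminary structure is fine: the tree representation of $\X$, the notion of a big node, the observation that a countable union of $\lefnt$-bounded families is bounded, and the resulting facts (a) (every big node has a big immediate extension) and (b) (above every big node there is a big node with two big children) are all correct, and they do produce a perfect set of \emph{distinct} elements of $\X$. But the step you yourself flag as the ``main obstacle'' is not a technical refinement of the fusion; it is the entire content of the lemma, and your proposal does not contain it. Facts (a) and (b) give no control over \emph{where} the new entries of the enumerations land, and almost disjointness is exactly such control. Moreover, your concrete plan --- fix in advance a partition $\N=L_s\cup R_s$ into two infinite pieces and force the two sides of the split to put all future entries into $L_s$, respectively $R_s$ --- cannot work as stated, because bigness does not localize to a prescribed infinite set. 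For instance, the closed family $\X=\{x: e_x(2n)\ \text{is even and}\ e_x(2n+1)=e_x(2n)+1\ \text{for all }n\}$ has unbounded enumerating functions, yet no element of $\X$ has a tail inside the evens or inside the odds; so with $L_s$ the evens there is no big (indeed no) extension confining the future to $L_s$, even though the lemma certainly holds for this $\X$. The partition must be chosen adaptively from the bigness structure, and doing so while preserving bigness is precisely the missing argument.

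What is actually needed is a quantitative strengthening of (a)/(b) in the spirit of the Hurewicz--Kechris--Saint-Raymond dichotomy: above every big node there is a big node $(\sigma',\tau')$ such that for every $N$ there is a big extension of $(\sigma',\tau')$ all of whose new entries exceed $N$ (equivalently, one extracts a superperfect tree of enumerations inside $\{e_x:x\in\X\}$ whose infinitely-splitting nodes have unbounded successor values). Note that the naive pointwise version of this is simply false --- for $\X=\{x:5\in x\}$ no proper big extension of the root has all of its new entries above $10$ --- so one must first pass to a suitable node, and proving that such nodes are dense among big nodes requires a decomposition genuinely different from the countable-union argument behind (a) and (b); this is where the real work lies. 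Once one has this claim, the fusion is easy and your reservoirs become unnecessary: process the Cantor scheme one node at a time, always pushing the next value (and hence, since enumerations are increasing, every subsequent new entry of that node) above every number used so far; then any two distinct branches intersect only in the finite part created before they split. Your sketch replaces this key claim by the hope that ``iterating (a) and (b)'' plus a pigeonhole will supply it, so the delicate core is missing rather than deferred. For comparison: the paper itself does not prove this lemma but cites Shelah (where it is implicit) and Veli\v ckovi\'c (where it is explicit), and those proofs are organized around exactly the kind of strengthened splitting claim described above.
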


Let us point out a corollary of lemma~\ref{l-4} which is interesting since, for
example, every $\Sigma^1_2$ set is a union of $\aleph_1$ many analytic sets.

\begin{cor}
If $\X\subseteq\irrational$ is the union of less than $\b$ analytic sets, then either
\begin{enumerate}
\item the family of enumerating functions of members of $\X$ is bounded in $(\irrationals,\lefnt\nobreak)$, or
\item there is a perfect almost disjoint subset of $\X$.
\end{enumerate}
\end{cor}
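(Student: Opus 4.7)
The plan is to apply Lemma~\ref{l-4} component-wise and then use the definition of the bounding number $\b$ to amalgamate the resulting bounds.

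Write $\X=\bigcup_{\alpha<\kappa}\X_\alpha$, where each $\X_\alpha\subseteq\irrational$ is analytic and $\kappa<\b$. Apply Lemma~\ref{l-4} to each $\X_\alpha$. If for some $\alpha$ the second alternative holds, then $\X_\alpha\subseteq\X$ already contains a perfect almost disjoint subset and we are done. So assume instead that the first alternative of Lemma~\ref{l-4} holds for every $\alpha<\kappa$, and choose, for each $\alpha$, a function $g_\alpha\in\irrationals$ such that $e_x\lefnt g_\alpha$ for all $x\in\X_\alpha$.

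The family $\{g_\alpha:\alpha<\kappa\}$ has cardinality $\kappa<\b$, so by the definition of the bounding number there is some $g\in\irrationals$ with $g_\alpha\lefnt g$ for every $\alpha<\kappa$. By transitivity of $\lefnt$, it follows that $e_x\lefnt g$ for every $x\in\X$, i.e.\ the family of enumerating functions of members of $\X$ is bounded in $(\irrationals,\lefnt)$.

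This is essentially a routine cardinal-invariant argument once Lemma~\ref{l-4} is in hand; the only thing to check is that $\lefnt$ is transitive (immediate, since the union of two finite sets is finite), so there is no real obstacle. The entire content is supplied by Lemma~\ref{l-4}, which handles a single analytic piece, and by the very definition of $\b$, which handles the amalgamation step.
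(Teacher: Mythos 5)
Your proof is correct and is essentially the argument the paper intends (the corollary is stated without proof precisely because this is the routine deduction): apply Lemma~\ref{l-4} to each of the fewer-than-$\b$ analytic pieces, and if no piece yields a perfect almost disjoint set, amalgamate the bounds $g_\alpha$ using the definition of $\b$ and transitivity of $\lefnt$.
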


Unfortunately, this dichotomy (lemma~\ref{u-2}) is too weak for our
needs. We need to bound the decay in branching, that is how much
smaller the levels of $S(n)$ are than $T(n)$ for $S\subseteq T$ in
$\A$, but first we must measure it. 

\subsection{Measuring the decay}
\label{sec:measuring-decay}

\newcommand{\Rln}{\len{\Hcal}}
\newcommand{\iln}{\len{\id}}
\newcommand{\rln}{\len \tau}
\newcommand{\nrln}{\nleu \tau}
\renewcommand{\irri}{\irrationals_\infty}
\newcommand{\meas}{\nu_\U^{\tau}}

Let $\irri\subseteq\irrationals$ denote the subfamily of 
functions $g$ such that $\lim_{n\to\infty}g(n)=\infty$.
For a function $\tau:[0,\infty)\to[0,\infty)$ on the nonnegative reals,
define a function $\rho_\tau:\irri\times\irri\to[0,\infty]$ by
\begin{equation}
  \label{eq:91}
  \rho_\tau(g,h)=\liminf_{n\to\infty}\frac{h(n)}{\tau\circ g(n)}.
\end{equation}
Then define a relation $\rln$ on $\irri$ by
\begin{equation}
  \label{eq:72}
  g\rln h\qquad\text{if}\qquad\rho_\tau(g,h)>0.
\end{equation}
For a family $\Hcal$ of such functions,
we define a relation $\Rln$ on $\irri$ by
\begin{equation}
  \label{eq:71}
  g\Rln h\qquad\text{if}\qquad g\rln h
  \espc\text{for all $\tau\in\Hcal$}.
\end{equation}
Typically we shall have
\begin{enumerate}[label=(\roman*), ref=\roman*, widest=iii]
\item\label{item:16} $\tau(x)=O(x)$ (i.e.~$\tau(x)$ is bounded by $Cx$ for
  some $C>0$),
\item\label{item:17} $\lim_{x\to\infty}\tau(x)=\infty$,
\item\label{item:24} $\tau$ is concave 
\save
\end{enumerate}
for all $\tau\in\Hcal$. Condition~\eqref{item:16} ensures that
\begin{equation}
  \label{eq:72a}
  g\le h\impls g\rln h,
\end{equation}
and in particular each $\rln$, and thus $\Rln$, is reflexive. 
Condition~\eqref{item:17} ensures that each $\rln$ is
nontrivial. Concavity gives us
\begin{equation}
  \label{eq:92}
  \tau(c x)\ge c\tau(x)\espc\text{for all $c\le 1$}.
\end{equation}
The reason we use a family of functions is to
obtain transitivity. A family $\Hcal$ with every $\tau\in\Hcal$
satisfying~\eqref{item:16}--\eqref{item:24}, that is moreover
closed under ``taking roots'', i.e.~
\begin{enumerate}[label=(\roman*), ref=\roman*, widest=iii]
\restore
\item\label{item:23} $\forall \tau\in\Hcal\,\exists \sigma\in\Hcal
\spc \sigma\circ \sigma=\tau$,
\end{enumerate}
is  said to be \emph{suitable}. 

\begin{example}
\label{x-2}
The singleton $\Hcal=\{\id\}$ is suitable.
\end{example}

\begin{example}
\label{x-1}
 $\Hcal=\{x\mapsto x^\alpha:0<\alpha<1\}$ is suitable since
 $x^{\sqrt\alpha}\circ x^{\sqrt\alpha}=x^\alpha$.
We could include $\alpha=1$, but then $\Rln$ would be the same as $\iln$.
\end{example}

\begin{lem}
\label{p-2}
Let $\Hcal$ be suitable. Then $\Rln$ is a quasi ordering of $\irri$. 
\end{lem}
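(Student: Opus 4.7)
The plan is to verify the two properties of a quasi-ordering in turn. Reflexivity is essentially built in; transitivity is the substantive half, and is where the root-closure hypothesis \eqref{item:23} does its real work.

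Reflexivity is immediate. Condition \eqref{item:16} is exactly the hypothesis behind \eqref{eq:72a}, so $g \le g$ gives $g \rln g$ for every $\tau \in \Hcal$, and hence $g \Rln g$.

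For transitivity, suppose $g \Rln h$ and $h \Rln k$, and fix an arbitrary $\tau \in \Hcal$. I would invoke \eqref{item:23} to pick $\sigma \in \Hcal$ with $\sigma \circ \sigma = \tau$, so that $g \len{\sigma} h$ and $h \len{\sigma} k$ by hypothesis. Writing $a = \rho_\sigma(g,h) > 0$ and $b = \rho_\sigma(h,k) > 0$, for any $\varepsilon \in (0,\min(a,b))$ and all sufficiently large $n$,
\begin{equation*}
h(n) \ge (a-\varepsilon)\sigma(g(n)) \qquad \text{and} \qquad k(n) \ge (b-\varepsilon)\sigma(h(n)).
\end{equation*}

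The crux is bounding $\sigma(h(n))$ from below by a positive multiple of $\tau(g(n)) = \sigma(\sigma(g(n)))$. This requires two facts about $\sigma$. First, $\sigma$ must be monotone nondecreasing: a nonnegative concave function on $[0,\infty)$ satisfying $\sigma(x)\to\infty$ cannot eventually decrease, since concavity would then force $\sigma(c)\to-\infty$ as $c\to\infty$, contradicting nonnegativity. Second, \eqref{eq:92} gives $\sigma(cx) \ge c\sigma(x)$ for $c \le 1$. Applying monotonicity in the case $a-\varepsilon \ge 1$, and \eqref{eq:92} in the case $a-\varepsilon < 1$, yields in either case
\begin{equation*}
\sigma(h(n)) \ge \sigma\bigl((a-\varepsilon)\sigma(g(n))\bigr) \ge \min(1,\,a-\varepsilon)\,\tau(g(n))
\end{equation*}
for all large $n$. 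Combining with the estimate for $k(n)$ and taking $\liminf$ gives $\rho_\tau(g,k) \ge (b-\varepsilon)\min(1,a-\varepsilon) > 0$, so $g \rln k$. Since $\tau$ was arbitrary, $g \Rln k$.

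The main (minor) obstacle is the monotonicity of $\sigma$, which is not stated explicitly in the definition of suitability but follows cleanly from the combination of \eqref{item:17} and \eqref{item:24}. Every other step is an application of the definitions of $\rho_\tau$ and $\Rln$, the inequality \eqref{eq:92}, and the specific choice $\sigma \circ \sigma = \tau$ afforded by \eqref{item:23}.
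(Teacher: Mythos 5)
Your proof is correct and follows essentially the same route as the paper's: take the square root $\sigma$ of $\tau$ via \eqref{item:23}, translate the two $\liminf$ hypotheses into eventual inequalities, and chain them using the monotonicity of $\sigma$ (which, as you note, follows from concavity together with \eqref{item:17}) and the concavity inequality \eqref{eq:92}. The only cosmetic difference is that the paper arranges the multiplicative constant to be $\le 1$ by shrinking $\varepsilon_0$, whereas you split into the cases $a-\varepsilon\ge1$ and $a-\varepsilon<1$; this changes nothing of substance.
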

\begin{proof}
Suppose $g_0\Rln g_1$ and $g_1\Rln g_2$. Take $\tau\in\Hcal$, and find
$\sigma\in\Hcal$ such that $\sigma\circ\sigma=\tau$.
Choose $0<\varepsilon_0<1$ so that $\varepsilon_0\rho_\sigma(g_0,g_1)\le 1$
and let $0<\varepsilon_1<1$ be arbitrary.
Then there are $k_0$ and $k_1$ such that 
$g_1(n)>\varepsilon_0\rho_\sigma(g_0,g_1)
\cdot \sigma\circ g_0(n)$ for all $n\ge k_0$ and
$g_2(n)>\varepsilon_1\rho_\sigma(g_1,g_2)\cdot\sigma\circ g_1(n)$ 
for all $n\ge k_1$. Hence, using~\eqref{eq:92} and the fact that 
$\sigma$ must be nondecreasing, 
$g_2(n)>\varepsilon_1\rho_\sigma(g_1,g_2)
\cdot\sigma\bigl(\varepsilon_0\rho_\sigma(g_0,g_1)
\cdot \sigma\circ g_0(n)\bigr)\ge 
\varepsilon_0\rho_\sigma(g_0,g_1)\varepsilon_1\rho_\sigma(g_1,g_2)
\cdot\tau\circ g_0(n)$
for all $n\ge\max(k_0,k_1)$, proving that $g_0\rln g_2$.
\end{proof}

For a subtree $T\subseteq\seq f$ with infinitely many branches,
clearly $h_T\in\irri$, were $h_T(n)$ is the size of the $n\Th$ level:
\begin{equation}
  \label{eq:93}
  h_T(n)=|T(n)|.
\end{equation}

\begin{prop}
\label{p-6}
Assume $\Hcal$ is suitable.
If $S\subseteq T$ then $h_S\le h_T$, and hence $h_S\Rln h_T$. 
\end{prop}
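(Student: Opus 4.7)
The plan is to observe that this proposition is essentially a direct consequence of the definitions combined with equation~\eqref{eq:72a}, which was already extracted from the assumption $\tau(x) = O(x)$ in item~\eqref{item:16} of the definition of suitable. So there is really nothing deep to do; the work has been front-loaded into the definitions.

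First I would establish the pointwise inequality $h_S \le h_T$. If $S \subseteq T$ as subsets of $\seq f$, then for each $n$, any sequence $t \in S$ of length $n$ lies in $T$ and has length $n$, so $S(n) \subseteq T(n)$. Taking cardinalities gives $h_S(n) = |S(n)| \le |T(n)| = h_T(n)$ for every $n$, i.e.\ $h_S \le h_T$ as functions on $\N$. Note also that since $T$ is infinitely branching we have $h_T \in \irri$, and the hypothesis of the proposition presumably implicitly assumes $S$ is infinitely branching as well (or at least has infinite levels in the limit) so that $h_S \in \irri$ and the quasi-ordering $\rln$ applies.

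Next I would invoke~\eqref{eq:72a}: the condition $\tau(x) = O(x)$ from~\eqref{item:16} guarantees that the pointwise inequality $g \le h$ already implies $g \rln h$, since if $\tau(x) \le Cx$ then $\tfrac{h(n)}{\tau \circ g(n)} \ge \tfrac{h(n)}{Cg(n)} \ge \tfrac{1}{C} > 0$ for all sufficiently large $n$, giving $\rho_\tau(g,h) \ge 1/C > 0$. Applied with $g = h_S$ and $h = h_T$, this yields $h_S \rln h_T$ for every $\tau \in \Hcal$, which is the definition of $h_S \Rln h_T$.

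I do not anticipate any obstacle here; the statement is a bookkeeping lemma recording that the relation $\Rln$ faithfully refines the subtree ordering, and no property of $\Hcal$ beyond~\eqref{item:16} (already invoked in deriving~\eqref{eq:72a}) is used. The role of this proposition in the paper is presumably to justify passing back and forth between $S \subseteq T$ in $\A$ and the associated inequality in the quasi-ordering $(\irri, \Rln)$ on level-size functions.
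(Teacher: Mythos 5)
Your proof is correct and matches what the paper intends: the paper states the proposition without proof, having already recorded in equation~\eqref{eq:72a} (via condition~\eqref{item:16}) that a pointwise inequality $g\le h$ yields $g\rln h$ for every $\tau\in\Hcal$, and the level-wise containment $S(n)\subseteq T(n)$ gives $h_S\le h_T$ exactly as you argue. Nothing further is needed.
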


\begin{defn}
\label{d-1}
Let $\Hcal$ be a suitable family, and let $\A\subseteq\tree(f)$ be a
family of infinitely branching trees. We say that $\A$ has
\emph{locally bounded branching decay with respect to $\Hcal$}
if there exists $T\in\A$ such that $h_T\Rln h_S$ for all $S\subseteq
T$ in $\A$. 
We say that $\A$ has \emph{\tu(globally\tu) bounded
  branching decay with respect to $\Hcal$} if the preceding statement holds
for all $T\in\A$. For $\Hcal$ as in example~\ref{x-1}, we say that $\A$
has \emph{\tu(locally\tu) \tu[globally\tu] bounded branching decay
for $x^{<1}$}.
\end{defn}

\begin{example}
\label{x-3}
We consider the case $\Hcal=\{\id\}$. Suppose that $\A\subseteq\tree(f)$  with $\seq f\in\A$. 
If $\A$ has globally bounded branching decay with respect to $\id$,
then by definition, $h_{\seq f}\Rln h_T$ for all $T\in\A$. Observe
however, that
\begin{equation}
  \label{eq:98}
  \rho_{\id}(h_{\seq f},h_T)=\mu_f([T])\espc\text{for all $T\in\tree(f)$}
\end{equation}
(cf.~equation~\eqref{eq:22}). Hence the global bounding property is
equivalent to every member of $\A$ having positive measure. 

If $\A$ has locally bounded branching decay with respect to $\id$, then there exists
some $T\in\A$ such that $h_T\Rln h_S$ for all $S\subseteq T$ in $\A$. 
Observe that $\rho_{\id}(h_T,\cdot)$ may define  a finitely additive
measure on $\A_T$ (if $T$ has
``uniform'' branching), even though we may have $\mu_f([T])=0$. 
Indeed, in this case we can define strictly positive finitely additive
measure by $\nu(S)=\lim_{n\to\U}\frac{h_S(n)}{h_T(n)}$ for a
nonprincipal ultrafilter $\U$. 
We remark, without proof, that it is possible to generalize the
construction in equation~\eqref{eq:18} to obtain a nonhomogeneous
colouring for $\A_T$. 
\end{example}

Now we consider a family $\A$ that has locally bounded branching decay
for~$x^{<1}$, witnessed by some $T\in\A$. 
First we notice that this is a
weaker condition than having locally bounded branching decay for $\id$.

Let $\Hcal$ be the family from example~\ref{x-1}.

\begin{prop}
\label{p-7}
For all $g,h\in\irri$, for all $\tau\in\Hcal$, $\rho_{\id}(g,h)>0$
implies $\rho_\tau(g,h)=\infty$. 
\end{prop}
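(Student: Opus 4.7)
The plan is to unfold both definitions and use the essential feature that every $g\in\irri$ tends to infinity.

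First I would fix $\tau\in\Hcal$, say $\tau(x)=x^\alpha$ with $0<\alpha<1$, and write $c=\rho_{\id}(g,h)>0$. By definition of $\liminf$, there is some $\varepsilon\in(0,c)$ and some $N$ such that
\begin{equation*}
h(n)\ge \varepsilon\cdot g(n)\espc\text{for all $n\ge N$}.
\end{equation*}
Dividing by $\tau\circ g(n)=g(n)^\alpha$ then gives
\begin{equation*}
\frac{h(n)}{\tau\circ g(n)}\ge \varepsilon\cdot g(n)^{1-\alpha}\espc\text{for all $n\ge N$}.
\end{equation*}

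Since $g\in\irri$ we have $\lim_{n\to\infty}g(n)=\infty$, and because $1-\alpha>0$ it follows that $g(n)^{1-\alpha}\to\infty$. Consequently the right-hand side tends to $\infty$, which forces
\begin{equation*}
\rho_\tau(g,h)=\liminf_{n\to\infty}\frac{h(n)}{\tau\circ g(n)}=\infty,
\end{equation*}
as required.

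There is really no obstacle here; the statement is a one-line calculation once one observes that the key gain is the factor $g(n)^{1-\alpha}$, which blows up precisely because $\tau$ grows strictly slower than the identity and $g$ is unbounded. The proposition is there to contrast the family $\Hcal$ from example~\ref{x-1} with the singleton $\{\id\}$ of example~\ref{x-2}: any domination bound that holds in the linear sense automatically holds with room to spare in the sublinear sense, so locally bounded branching decay for $\id$ is strictly stronger than for $x^{<1}$, as claimed in the paragraph preceding the statement.
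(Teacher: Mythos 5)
Your proof is correct, and it is exactly the straightforward computation the paper leaves implicit: the paper states proposition~\ref{p-7} without proof, evidently regarding the gain of the factor $g(n)^{1-\alpha}\to\infty$ as immediate. Your unfolding of the definitions (eventual bound $h(n)\ge\varepsilon g(n)$ from $\rho_{\id}(g,h)>0$, then dividing by $g(n)^\alpha$) is precisely the intended argument, so there is nothing to add.
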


For reasons related to the central limit theorem (cf.~\cite{Hirs2}), we
are especially interested in $\alpha=\frac12$.

\begin{prop}
\label{p-8}
For all $g,h\in\irri$ and all $0<\alpha<1$,
 $g\Rln h$ implies that
 $\liminf_{n\to\infty}\frac{h(n)}{g(n)^\alpha}=\infty$.
\end{prop}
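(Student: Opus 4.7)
The plan is to leverage the fact that $\Hcal$ contains \emph{all} power functions $x \mapsto x^\beta$ with $0<\beta<1$, not just the specific exponent $\alpha$ appearing in the conclusion. Given $0 < \alpha < 1$, my first step would be to pick some $\beta$ with $\alpha < \beta < 1$ and single out the function $\tau(x) = x^\beta$, which belongs to $\Hcal$.

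Next, I would specialize the hypothesis $g \Rln h$ to this particular $\tau$, obtaining $g \rln h$. Unpacking the definitions of $\rho_\tau$ and $\rln$, this means
\begin{equation*}
\rho_\tau(g,h) \;=\; \liminf_{n\to\infty} \frac{h(n)}{g(n)^\beta} \;>\; 0,
\end{equation*}
so there exist a constant $c > 0$ and an index $N$ with $h(n) \geq c\cdot g(n)^\beta$ for every $n \geq N$. Dividing both sides by $g(n)^\alpha$ then yields
\begin{equation*}
\frac{h(n)}{g(n)^\alpha} \;\geq\; c\cdot g(n)^{\beta-\alpha} \qquad \text{for all } n \geq N.
\end{equation*}

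To finish, I would invoke $g\in\irri$, which by definition forces $g(n)\to\infty$, together with the positivity $\beta-\alpha>0$, so that the lower bound $c\cdot g(n)^{\beta-\alpha}$ diverges to $\infty$. This forces $\liminf_{n\to\infty}\frac{h(n)}{g(n)^\alpha}=\infty$, as required. No serious obstacle arises here: the content of the proposition is essentially the observation that a positive $\liminf$ against $g^\beta$ for some $\beta>\alpha$ leaves a slack factor $g^{\beta-\alpha}$ which, because $g$ itself diverges, overwhelms any finite positive constant. The only care required is in choosing $\beta$ strictly between $\alpha$ and $1$, so that $\tau$ remains in $\Hcal$ while the exponent $\beta-\alpha$ is still positive.
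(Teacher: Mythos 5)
Your argument is correct and is exactly the routine computation the paper has in mind (the proposition is stated there without proof): specializing $g\Rln h$ to $\tau(x)=x^\beta$ for some $\alpha<\beta<1$ gives $h(n)\geq c\,g(n)^\beta$ eventually, and the surplus factor $g(n)^{\beta-\alpha}\to\infty$ because $g\in\irri$ forces $g(n)\to\infty$. No gaps; the only point needing care, choosing $\beta$ strictly between $\alpha$ and $1$ so that $\tau\in\Hcal$, is handled.
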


It is easy to construct an analytic, indeed countable, family of trees
that is not weakly distributive and does not have locally bounded
branching decay for $x^{<1}$. For this reason, our Sperner Theoretic
analysis does not apply to the non-weakly distributive case. An
analytic family of trees that is equivalent as a forcing notion to
Cohen forcing does have a nonhomogeneous colouring.
However, the existence of nonhomogeneous colourings for arbitrary
analytic non-weakly distributive ccc families of trees depends on the
more difficult problem than question~\ref{q-1}, of finding a complete
characterization of Souslin ccc forcing notions, and not just a basis. 
Thus we restrict our attention to the weakly distributive case to
obtain the following weakening of dichotomy~\ref{u-1}.

\begin{emp}
\label{dichotomy:1}
Every weakly distributive analytic family $\A$ of infinitely branching subtrees of $\seq f$
satisfies at least one of the following\textup:
\begin{enumerate}
\item\label{item:2a} There exists a colouring $c:\seq f\to\two$ and a
  $T\in\A$ such
  that $S(n)$ is homogeneous for $c$ for at most finitely many
  $n\in\N$, for every $S\in\A_T$.
\item\label{item:1a} The poset $(\A,\subseteq)$ has an uncountable antichain.
\end{enumerate}
\end{emp}

\begin{question}
\label{q-3}
Does every analytic family $\A\subseteq\tree(f)$ of infinitely
branching trees, that both is weakly distributive and satisfies the
ccc, have locally bounded branching decay for $x^{<1}$\tu?
\end{question}

We shall prove (theorem~\ref{u-3}) that a positive answer to
question~\ref{q-3} together with our Sperner Theoretic 
conjecture (conjecture~\ref{co:1})
establishes dichotomy~\ref{dichotomy:1}. 

Let us point out that the Sperner Theory is making a substantial
contribution towards the dichotomy. This is because if we ask instead
for ``locally bounded branching decay for $\id$'' in question~\ref{q-3}, the
answer is most likely negative. Indeed, in view of example~\ref{x-3},
we expect that Talagrand's construction (\cite{MR2214604}) 
of a counterexample to the Control Measure problem
provides a counterexample to this question.

\section{The coideals}
\label{sec:coideal}

For a subset $A\subseteq\N$ and $k\in\N$ we let 
\begin{equation}
\label{eq:42}
A-k=A\setminus\{0,\dots,k\},
\end{equation} 
with $A-(-1)=A$. Please note that this disagrees with many authors' usage!
For a function  $F$ with domain $\N$ denote
\begin{equation}
  \label{eq:41}
  \N(F)=\bigcup\prod_{n=0}^\infty F(n),
\end{equation}
i.e.~the collection of all pairs $(n,x)$ with $x\in F(n)$. And for a subset
$S\subseteq\N(F)$ and $k\in\N$, denote
\begin{equation}
  \label{eq:43}
  S-k=\{(n,x)\in S:n>k\}.
\end{equation}
Throughout this article we assume that $F$ is a sequence of nonempty finite sets,
and thus $\N(F)$ is countably infinite.

We recall that a set $A\subseteq\N$ \emph{diagonalizes} a sequence $(A_k:k\in\N)$ of
subsets of $\N$ if $A-k\subseteq A_k$ for all $k\in A$. And a coideal $\Hcal$ on $\N$ is \emph{selective}
if every \emph{descending} sequence $A_0\supseteq A_1\supseteq A_2\supseteq\cdots$
of members of $\Hcal$ has a diagonalization in $\Hcal$. We next generalize the definition
of selective to coideals on $\N(F)$.

\begin{defn}
$B\subseteq\N(F)$ \emph{diagonalizes} a sequence $(B_k:k\in\N)$ of subsets of~$\N(F)$ 
if $B-k\subseteq B_k$ whenever $k\in\dom(B)$. The definition of a selective
coideal on $\N(F)$ is the same as for $\N$ but with this notion of diagonalization.
\end{defn}

We suppose now that $\U$ is a nonprincipal selective
ultrafilter on $\N$, and that $g\in\irrationals$ with $g(n)$ nonzero for all $n$ is a given parameter. For a subset
$A\subseteq\N(F)$, write $A(n)=\{x:(n,x)\in A\}$. Define
$\Hcal=\Hcal(\U,F,g)$ by 
\begin{equation}
  \label{eq:1}
  \Hcal=\left\{A\subseteq\N(F):\lim_{n\to\U}\frac{|A(n)|}{g(n)}=\infty\right\}.
\end{equation}

\begin{prop}
\label{p-1}
If $\lim_{n\to\infty}\frac{g(n)}{|F(n)|}=0$ then $\Hcal$ is a nonempty selective coideal.
\end{prop}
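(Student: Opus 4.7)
The plan is to verify in turn the three requirements of being a nonempty selective coideal: nonemptiness, the ``prime'' (coideal) property together with upward closure, and selectivity. The first two are routine; the interesting content is the third, where I would reduce selectivity of $\Hcal$ on $\N(F)$ to selectivity of $\U$ on $\N$.

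For nonemptiness, the hypothesis gives $|F(n)|/g(n)\to\infty$ in the ordinary sense, so a fortiori $\lim_{n\to\U}|F(n)|/g(n)=\infty$, placing $\N(F)\in\Hcal$. Upward closure is immediate. For the prime property, suppose $A\cup B\in\Hcal$; I would note $\max(|A(n)|,|B(n)|)\ge|(A\cup B)(n)|/2$ and split $\N$ according to whether $|A(n)|\ge|B(n)|$. Since $\U$ is an ultrafilter one side is $\U$-large, and along it the corresponding set witnesses its own membership in $\Hcal$.

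For selectivity, suppose $B_0\supseteq B_1\supseteq\cdots$ is descending in $\Hcal$. The plan is to translate the problem to $\N$ via
\begin{equation*}
E_k=\{n\in\N:|B_k(n)|>k\cdot g(n)\}.
\end{equation*}
Each $E_k\in\U$ because $B_k\in\Hcal$, and $(E_k)$ inherits descent from $(B_k)$, so selectivity of $\U$ on $\N$ delivers a diagonalizing $E\in\U$; enumerate $E=\{n_0<n_1<\cdots\}$. Then set
\begin{equation*}
B(n_j):=B_{n_{j-1}}(n_j)\text{ for }j\ge 1,\qquad B(n):=\emptyset\text{ otherwise}.
\end{equation*}
The relation $n_j\in E-n_{j-1}\subseteq E_{n_{j-1}}$ forces $|B(n_j)|>n_{j-1}\cdot g(n_j)\to\infty$ along $E\setminus\{n_0\}\in\U$, giving $B\in\Hcal$. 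For the required $B-k\subseteq B_k$: whenever $k=n_i\in\dom(B)$ and $n_j\in\dom(B)$ exceeds $k$, then $j>i$, so $n_{j-1}\ge n_i=k$, and descent of $(B_k)$ yields $B(n_j)=B_{n_{j-1}}(n_j)\subseteq B_{n_i}(n_j)=B_k(n_j)$; for $n$ outside the enumeration $B(n)=\emptyset$ trivially.

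The main obstacle is reconciling two competing demands: the diagonalization condition demands $B(n)\subseteq B_k(n)$ for \emph{every} $k\in\dom(B)$ below $n$, while membership $B\in\Hcal$ demands each nontrivial level $B(n)$ remain large relative to $g(n)$. The ``latest predecessor'' choice $B(n_j):=B_{n_{j-1}}(n_j)$ exploits descent of $(B_k)$ to collapse all active predecessor constraints into the single tightest one, while the $\N$-selectivity of $\U$ guarantees that this one-level restriction still blows up $\U$-asymptotically. Without the descent hypothesis a genuine diagonal-intersection argument would be required, and the combinatorial estimates inside $\N(F)$ would be substantially more delicate.
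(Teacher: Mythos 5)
Your proof is correct and follows essentially the same route as the paper's: both transfer the problem to $\N$ via the sets of levels where $|B_k(n)|/g(n)>k$, apply selectivity of $\U$ to get a diagonalizing $E\in\U$, and then define the diagonalization level-by-level using the tightest active index (your ``predecessor in $E$'' choice plays the same role as the paper's intersection $\bigcap_{k:n\in U_k}A_k(n)$, which collapses to a single $A_{k}(n)$ by descent). The only difference is cosmetic bookkeeping, plus your spelled-out verification of the coideal property, which the paper dismisses as obvious.
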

\begin{proof}
The assumptions that the limit is zero and that $\U$ is nonprincipal ensure that
$\N(F)\in\Hcal$. Obviously $\Hcal$ is a coideal. 

Suppose that $(A_k:k\in\N)$ is a descending sequence of members of $\Hcal$. Then there
is a descending sequence $(U_k:k\in\N)$ of members of $\U$ such that 
\begin{equation}
  \label{eq:2}
  \frac{|A_{k}(n)|}{g(n)}>k\espc\text{for all $n\in U_{k}$,}
\end{equation}
for all $k\in\N$. And then there is a $U\in\U$ with $U-k\subseteq U_k$ for all
$k\in U$. Now if we define
\begin{equation}
  \label{eq:30}
  B=\bigcup_{n\in U}\Biggl(\{n\}\times\bigcap_{k:n\in U_{k}}A_k(n)\Biggr),
\end{equation}
then $B\in\Hcal$ because $(A_k:k\in\N)$ is descending. And for all $k$, 
if $k\in \dom(B)$ then $k\in U$ and thus $B-k\subseteq \bigcup_{n\in U_k}
\bigl(\{n\}\times B(n)\bigr)\subseteq A_k$.
\end{proof}

\section{The forcing notion}
\label{sec:partial-order}

Define
\begin{equation}
  \label{eq:23}
  \fpf(F)=\{a\subseteq\N(F):a\text{ is a finite partial function}\},
\end{equation}
and for $a\in\fpf(F)$, denote $\hat a=\max(\dom(a))$ with $\hat\emptyset=-1$. For
a subset $S\subseteq\N(F)$ and $a\in\fpf(F)$, we let
\begin{equation}
  \label{eq:24}
  S-a=S-\hat a.
\end{equation}
Suppose that $\Hcal$ is a nonempty selective coideal on $\N(F)$.
Let $\Q=\Q(\Hcal)$ be the poset of all pairs $(a,A)$ where $a\in\fpf(F)$ and $A\in\Hcal$,
ordered by $(b,B)\le(a,A)$ if $a\sqsubseteq b$ (i.e.~$b$ end-extends $a$),
$b\setminus a\subseteq A$ and $B\subseteq A$. An extension $(b,B)\le(a,A)$ is called
a \emph{pure extension} of $(a,A)$ if $b=a$.
For convenience, we also insist that $A-a=A$, i.e.~$\hat a<\min(\dom(A))$.

Forcing with $\Q$ introduces a generic partial function $\dot c$ on $\N$ with $\dot
c(n)\in F(n)$, by defining
\begin{equation}
  \label{eq:32}
  \dot c=\bigcup_{(a,A)\in\dot\G}a
\end{equation}
where $\dot\G$ is a $\Q$-name for the generic filter of $\Q$. We also write 
\begin{equation}
  \label{eq:52}
  \dot D=\dom(\dot c).
\end{equation}

\begin{lem}
\label{l-6}
For every sentence $\varphi$ in the forcing language of $\Q$,
every condition in $\Q$ has a pure extension deciding whether or not $\varphi$ is true.
\end{lem}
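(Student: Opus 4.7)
This is the standard pure-decision lemma for Mathias-style forcing with a selective coideal, and the plan is to adapt Mathias's original argument. Fix $(a, A) \in \Q$ and a sentence $\varphi$. For any sub-reservoir $A' \in \Hcal$ with $A' \subseteq A$ and any finite stem $a' \in \fpf(F)$ with $a \sqsubseteq a'$ and $a' \setminus a \subseteq A'$, declare $a'$ to be of type $+$ (respectively $-$) relative to $A'$ if some pure extension of $(a', A' - a')$ forces $\varphi$ (respectively $\neg\varphi$), and of type $0$ otherwise. Since any pure extension of $(a', B - a')$ for $B \subseteq A'$ is also a pure extension of $(a', A' - a')$, being of type $+$ or $-$ is preserved under shrinking of the reservoir, so type $0$ is inherited by every sub-reservoir.

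The core task is: assuming toward contradiction that no pure extension of $(a, A)$ decides $\varphi$, produce $B \subseteq A$ in $\Hcal$ such that every stem $a' \supseteq a$ with $a' \setminus a \subseteq B$ is of type $0$ relative to $B$. Granted such $B$, density supplies an extension $(b, C) \le (a, B)$ forcing $\varphi$ or $\neg\varphi$; then $(b, C) \le (b, B - b)$ witnesses that the stem $b$ is of type $+$ or $-$ relative to $B$, contradicting the defining property of $B$. The construction of $B$ is a Mathias-style fusion: inductively build a descending sequence $A = A_0 \supseteq A_1 \supseteq \cdots$ in $\Hcal$, handling at stage $n$ the finitely many stems $a'$ with $\hat{a'} \le n$ and $a' \setminus a \subseteq A_n$ (finite because each $F(k)$ is finite), and refining $A_n$ to $A_{n+1} \in \Hcal$ so as to eliminate any type-$\pm$ stems. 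Applying the selectivity of $\Hcal$ to $(A_n)$ then produces a diagonalization $B \in \Hcal$ with $B - k \subseteq A_k$ for every $k \in \dom B$, and careful bookkeeping of the heads of the $A_n$ ensures the type-$0$ property transfers to every stem lying in $B$.

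The main obstacle is the refinement step. One must show that the existence of a type-$\pm$ stem relative to $A_n$ already contradicts the standing hypothesis on $(a, A)$, so that the inductive shrinking can be performed while remaining in the coideal. This reduction---from ``some $a'$ with $a' \setminus a \subseteq A_n$ is of type $+$ relative to $A_n$'' to ``$(a, A_n)$ itself admits a pure extension forcing $\varphi$''---is the essential Mathias-style coalescence, combining a finite pigeonhole over each alphabet $F(k)$ under the ultrafilter $\U$ with the selectivity of $\Hcal$ to paste per-coordinate witnesses of $\varphi$-forcing into a single pure extension of $(a, A_n)$ forcing $\varphi$. This is the step most sensitive to the particular form $\Hcal = \Hcal(\U, F, g)$ of the coideal, and where the interplay of coideal, ultrafilter, and selectivity requires the most delicate bookkeeping.
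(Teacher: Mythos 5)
Your overall frame---Mathias-style pure decision via fusion along stems plus selectivity---is the right family of argument, and your bookkeeping shell (descending reservoirs handling the finitely many stems of bounded height, then a diagonalization in $\Hcal$) matches the paper's fusion steps. But the proof has a genuine gap exactly where you place ``the main obstacle,'' and the reduction you propose there is not merely unproven but false as stated. You want: if some stem $a'$ with $a'\setminus a\subseteq A_n$ is of type $+$ relative to $A_n$, then $(a,A_n)$ itself admits a pure extension forcing $\varphi$. Take $\varphi$ to be the statement ``$(n_1,x_1)\in\dot c$'' for a fixed pair $(n_1,x_1)\in A$: the stem $a\cup\{(n_1,x_1)\}$ is of type $+$ relative to $A$ (every condition with that stem forces $\varphi$), yet no pure extension $(a,C)$ forces $\varphi$, since $(a,C\setminus\{(n_1,x_1)\})$ is a further pure extension forcing $\lnot\varphi$; only the opposite decision is available purely. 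So there is no hope of ``pasting per-coordinate witnesses of $\varphi$-forcing into a single pure extension forcing $\varphi$'': the sign cannot be preserved. The sign-free version of your reduction---a decision with a longer stem over a sub-reservoir yields some pure decision at the root---is just the lemma itself (by density, deciding conditions with longer stems always exist below $(a,A)$), so your decomposition packs the entire content of the lemma into the step you leave unproved. Relatedly, the object you set out to construct (a reservoir $B$ all of whose stems are of type $0$ relative to $B$) can never exist, by the very density argument you invoke at the end; any construction of it would have to covertly prove the lemma first.

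What actually closes the gap, and what the paper does, is an asymmetric accept/reject dichotomy rather than a symmetric type classification: $B$ \emph{accepts} $b$ if $(b,B-b)\forces\varphi$ (the pure condition itself forces it), and \emph{rejects} $b$ if no $C\subseteq B$ in $\Hcal$ accepts $b$. One fusion (claim~\ref{c-1}) produces $B\subseteq A$ deciding every stem lying in $B$; if $B$ accepts $a$ we are done, and otherwise a second fusion (claim~\ref{c-2}), powered by the soft observation that a set accepting all one-point extensions of $b$ already accepts $b$ (claim~\ref{c-4}, a pure extension-and-density argument with no pigeonhole), makes rejection hereditary, and hereditary rejection of $\varphi$ yields $(a,C)\forces\lnot\varphi$. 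Note also that your closing paragraph misplaces the difficulty: the lemma is stated and proved for an arbitrary nonempty selective coideal on $\N(F)$, and only selectivity is used; no pigeonhole over the alphabets $F(k)$ and no appeal to $\U$ beyond the selectivity of $\Hcal(\U,F,g)$ already established in proposition~\ref{p-1} is needed.
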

\begin{proof}
Fix $(a,A)\in\Q$ and a sentence $\varphi$. For all $b\in\fpf(F)$ and $B\in\Hcal$,
we will say that $B$ \emph{accepts} $b$ if $(b,B-b)\forces\varphi$, $B$ \emph{rejects}
$b$ if there is no $C\subseteq B$ in $\Hcal$ which accepts $b$, and $B$
\emph{decides} $b$ if it either accepts or rejects $b$. The following facts are
immediate from the definitions.
\begin{enumeq}{10}
\item\label{item:10} For all $b\in\fpf(F)$ and $B\in\Hcal$ there is a $C\subseteq B$ in $\Hcal$
  deciding $b$.
\item\label{item:11} If $B$ accepts/rejects $b$, then $C$ accepts/rejects $b$ for
  every $C\subseteq B$ in $\Hcal$.
\end{enumeq}

\begin{claim}
\label{c-4}
If $B\in\Hcal$ accepts $b\cup\{s\}$ for all $s\in B-b$, then $B$ accepts $b$.
\end{claim}
\begin{proof}
Assume that $B$ accepts $b\cup\{s\}$ for all $s\in B-b$. 
Supposing towards a contradiction that $B$ does not accept $b$, there exists
$(c,C)\le(b,B-b)$ such that
\begin{equation}
\label{eq:51}
(c,C)\forces\lnot\varphi.
\end{equation}
By extending $(c,C)$ if necessary, we can assume that
$b$ is a proper initial segment of $c$. Then there is an $s\in B-b$ with
$b\cup\{s\}\sqsubseteq c$. However, by assumption,
$(b\cup\{s\},B-\{s\})\forces\varphi$, contradicting~\eqref{eq:51} because $(c,C)\le(b\cup\{s\},B-\{s\})$.
\end{proof}

\begin{claim}
\label{c-1}
There is a $B\subseteq A$ in $\Hcal$ which decides $a\cup b$ 
for every finite partial function $b\subseteq B$.
\end{claim}
\begin{proof}
Using~\eqref{item:10}, construct a descending 
sequence $A\supseteq B_0\supseteq B_1\supseteq\cdots$ 
of members of $\Hcal$ such that 
\begin{equation}
  \label{eq:40}
  B_n\text{ decides }a\cup b\espc\text{for all partial functions $b\subseteq A$ with
    $\hat b\le n$,}
\end{equation}
for all $n$. Let $B\subseteq B_0$ in $\Hcal$ diagonalize $(B_n:n\in\N)$. Then $B$
decides $a$ by~\eqref{item:11} and~\eqref{eq:40} with $n=0$. 
Take $\emptyset\ne b\subseteq B$ in $\fpf(F)$. Then $B-(a\cup b)=B-\hat b\subseteq B_{\hat b}$ and
thus by~\eqref{eq:40}, $B$ decides $a\cup b$.
\end{proof}

Letting $B$ be as in claim~\ref{c-1}, in particular, $B$ decides $a$, and thus we
need only concern ourselves when $B$ rejects $a$, in which case we make the
following claim. 

\begin{claim}
\label{c-2}
There is a $C\subseteq B$ in $\Hcal$ which rejects $a\cup b$ for every finite partial
function $b\subseteq C$.
\end{claim}
\begin{proof}
Using claims~\ref{c-4} and~\ref{c-1}, 
we can construct a descending sequence $B\supseteq
C_0\supseteq C_1\supseteq\cdots$ in $\Hcal$ so that
\begin{multline}
  \label{eq:48}
  C_n\text{ rejects }a\cup b\cup\{s\}\espc
  \text{for all $s\in C_n-b$},\\ \text{for all $b\subseteq B$ in $\fpf(F)$ with
    $\hat b\le n$ where $B$ rejects $a\cup b$}.
\end{multline}
Let $C\subseteq C_0$ in $\Hcal$ diagonalize the sequence. It follows by induction that $C$ rejects
$a\cup b$ for all $b\subseteq C$ in $\fpf(F)$, because if $b\subseteq C$, 
$s\in C-(a\cup b)=C-\hat b\subseteq C_{\hat b}$ (where $C_{-1}=C_0$) and $C$ rejects $a\cup b$, 
then since $B$ decides $a\cup b$, $B$ rejects $a\cup b$ by~\eqref{item:11},  
and thus $C$ rejects $a\cup b\cup\{s\}$ by~\eqref{eq:48}.
\end{proof}

Now if there were some $(b,D)\le(a,C)$ forcing $\varphi$, then since $b\setminus a\subseteq C$,
this would contradict that $C$ rejects $b$. Thus $(a,C)\forces\lnot\varphi$.
\end{proof}

\begin{lem}
\label{l-1}
Let $\varphi(x)$ be a formula in the forcing language of $\Q$. Then for every
$(a,A)\in\Q$, there exists $B\subseteq A$ in $\Hcal$ such that $(a\cup b,B-b)$ decides
$\varphi(\check a\cup \check b)$ for every finite partial function $b\subseteq B$.
\end{lem}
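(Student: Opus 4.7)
The plan is a standard fusion argument built around Lemma~\ref{l-6}. I will iteratively shrink along a descending sequence $A = A_{\hat a} \supseteq A_{\hat a+1} \supseteq \cdots$ in $\Hcal$ so that at each stage $n > \hat a$ every finite partial function $b$ with $\hat b = n$ and $\hat a < \min \dom(b)$ has its decision already secured by the tail, and then invoke selectivity of $\Hcal$ to extract a single $B$ witnessing the lemma. The case $b = \emptyset$ is handled at the outset by replacing $A$ with the pure extension of $(a, A)$ furnished by Lemma~\ref{l-6} for the sentence $\varphi(\check a)$, so that we may assume $(a, A)$ already decides $\varphi(\check a)$.

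At stage $n > \hat a$ there are only finitely many $b \in \fpf(F)$ with $\hat a < \min \dom(b)$ and $\hat b = n$, since each such $b$ is a partial function from $\{\hat a + 1, \dots, n\}$ into the finite set $\bigcup_{i \le n} F(i)$. Enumerate them as $b^0, \dots, b^{m-1}$, set $A_n^0 = A_{n-1}$, and for $j = 0, \dots, m-1$ invoke Lemma~\ref{l-6} on the condition $(a \cup b^j, A_n^j - n)$ with the sentence $\varphi(\check a \cup \check{b}^j)$ to obtain $A_n^{j+1} \subseteq A_n^j - n$ in $\Hcal$ for which the pure extension $(a \cup b^j, A_n^{j+1})$ decides $\varphi(\check a \cup \check{b}^j)$; finally put $A_n = A_n^m$. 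From sub-stage $1$ onwards each $A_n^j$ is contained in $\{(i,x) : i > n\}$, so $A_n - n = A_n$, and since $A_n \subseteq A_n^{j+1}$ the deciding property is inherited by pure extension, so $(a \cup b^j, A_n)$ decides $\varphi(\check a \cup \check{b}^j)$ for every $j$.

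Extend $(A_n)$ by $A_n = A$ for $n \le \hat a$ to obtain a descending sequence in $\Hcal$ indexed by $\N$, and apply selectivity of $\Hcal$ to produce $B \in \Hcal$ diagonalizing it; after passing to $B - \hat a$ if necessary, we may assume $B \subseteq A$ and $\hat a < \min \dom(B)$. For every nonempty $b \subseteq B$ with $\hat b = n$, we have $n \in \dom(b) \subseteq \dom(B)$, so $B - b = B - n \subseteq A_n$, and hence $(a \cup b, B - b) \le (a \cup b, A_n)$ is a pure extension that inherits the decision of $\varphi(\check a \cup \check b)$; the case $b = \emptyset$ is covered by $(a, B) \le (a, A)$ inheriting the initial decision of $\varphi(\check a)$. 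The technical subtlety driving the whole argument is that each inner application of Lemma~\ref{l-6} strips away the elements of $A_n^j$ at indices $\le n$, which is precisely what aligns the descending sequence with the selectivity condition $B - n \subseteq A_n$, so no additional book-keeping to retain small-index elements is required; I expect this alignment to be the main subtlety, with the rest of the argument following standard fusion-style reasoning and the finiteness of the collection $\{b : \hat b = n\}$.
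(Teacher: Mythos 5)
Your proposal is correct and follows essentially the same route as the paper's own proof: at each level one uses Lemma~\ref{l-6} finitely many times to build a descending sequence $(A_n)$ in $\Hcal$ whose $n$-th term decides $\varphi(\check a\cup\check b)$ for all relevant $b$ with $\hat b\le n$, and then one diagonalizes using selectivity, the decision being inherited by the pure extension $(a\cup b,B-b)$ since $B-\hat b\subseteq A_{\hat b}$. The only small imprecision is that passing to $B-\hat a$ does not by itself guarantee $B\subseteq A$, because the diagonalization clause says nothing about the elements of $B$ at the level $\min\dom(B)$; passing instead to $B-\max\bigl(\hat a,\min\dom(B)\bigr)$ fixes this, and the same point is glossed over in the paper's one-line ``let $B\subseteq B_0$ in $\Hcal$ diagonalize''.
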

\begin{proof}
Using lemma~\ref{l-6}, 
choose a descending sequence
 $A\supseteq B_0\supseteq B_1\supseteq\cdots$ 
in~$\Hcal$ such that 
\begin{equation}
  \label{eq:46}
  (a\cup b,B_n)\text{ decides }\varphi(a\cup b)\espc
  \text{for all $b\subseteq  A$ in $\fpf(F)$ with $\hat b\le n$}.
\end{equation}
If $B\subseteq B_0$ in $\Hcal$ diagonalizes $(B_n:n\in\N)$, then $B$ satisfies the conclusion of the lemma.
\end{proof}

\section{Shades}
\label{sec:shades}

One of the basic notions in Sperner theory is the \emph{shade} (also
called \emph{upper shadow}) of a set or a family
of sets (see e.g.~\cite{MR2003b:05001},\cite{MR1429390}). 
For a subset $x$ of a fixed set $S$, the shade of $x$ is
\begin{equation}
  \label{eq:8}
  \nabla(x)=\{y\subseteq S:x\subset y\text{ and }|y|=|x|+1\},
\end{equation}
and the shade of a family~$X$ of subsets of $S$ is 
\begin{equation}
\label{eq:11}
\nabla(X)=\bigcup_{x\in X}\nabla(x).
\end{equation}
Recall that the \emph{$m$-shade} (also called \emph{upper $m$-shadow}
or \emph{shade at the $m\Th$ level}) of $x$ is
\begin{equation}
  \label{eq:13}
  \shadeto m(x)=\{y\subseteq S:x\subseteq y\text{ and }|y|=m\},
\end{equation}
and $\shadeto m(X)=\bigcup_{x\in X}\shadeto m(x)$. We follow the
Sperner theoretic conventions of writing $[m,n]$ for the set
$\{m,m+1,\dots,n\}$ and  $[n]$ for the set $[1,n]=\{1,\dots,n\}$. 

We introduce the following notation for colouring sets with two colours. For a set
$S$, let $S \choose [m]$ denote the collection of all colourings $c:S\to\two$ with
$|c\inv(0)|=m$, i.e.~$c\inv(0)=\{j\in S:c(j)=0\}$. This is related to shades,
because for all $c\in{S\choose[m]}$ and all $x\subseteq S$,
\begin{multline}
  \label{eq:49}
  x\text{ is homogeneous for }c\\ \Iff c\inv(0)\in\shadeto m(x)\Or c\inv(1)\in\shadeto{|S|-m}(x).
\end{multline}

When a nonhomogeneous colouring is desired, it is most efficient to use colorings in
$S\choose [m]$ for $|S|=2m$. Equation~\eqref{eq:49} immediately gives us:

\begin{lem}
\label{l-2}
Suppose $X$ is a family of subsets of $[2m]$. Then
\begin{equation*}
  \left|\left\{c\in{[2m]\choose[m]}:\exists x\in X
      \spc x\textup{ is homogeneous for }c\right\}\right|
  \le2|\shadeto m(X)|
\end{equation*}
\textup(the shades are with respect to $S=[2m]$\textup).
\end{lem}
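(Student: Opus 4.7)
The plan is to apply equation~\eqref{eq:49} directly, using the crucial observation that when $|S| = 2m$ the two shades appearing on the right-hand side are both at the same level $m$, which is what makes the factor of $2$ work out on the nose.

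First I would identify each colouring $c \in \binom{[2m]}{[m]}$ with the $m$-subset $A_c = c\inv(0) \in \binom{[2m]}{m}$; the complementary $m$-subset is then $c\inv(1) = [2m] \setminus A_c$. Applying equation~\eqref{eq:49} with $|S| = 2m$, we get that there exists $x \in X$ homogeneous for $c$ if and only if either $A_c \in \shadeto m(X)$ or $[2m] \setminus A_c \in \shadeto m(X)$. Thus the set of colourings whose size we wish to bound injects into the set
\[
\Bigl\{A \in \tbinom{[2m]}{m} : A \in \shadeto m(X)\Bigr\} \;\cup\; \Bigl\{A \in \tbinom{[2m]}{m} : [2m]\setminus A \in \shadeto m(X)\Bigr\}.
\]

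The first set in the union has cardinality exactly $|\shadeto m(X)|$. For the second set, the map $A \mapsto [2m] \setminus A$ is an involution on $\binom{[2m]}{m}$ (here it is essential that we are complementing within a set of size $2m$, so $m$-subsets map to $m$-subsets), and it sends this set bijectively onto $\shadeto m(X) \cap \binom{[2m]}{m}$, which again has cardinality at most $|\shadeto m(X)|$. Adding the two bounds by the union bound gives $2|\shadeto m(X)|$, as required.

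The proof is essentially bookkeeping; there is no real obstacle. The only point worth pausing on is to make sure that one is not double-counting within each set (which one isn't, since we only apply a union bound), and to notice that the symmetry $m = |S| - m$ is what makes both terms collapse to the same quantity $|\shadeto m(X)|$ rather than mixing shades at two different levels.
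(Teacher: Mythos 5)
Your argument is correct and matches the paper's intent exactly: the paper derives the lemma immediately from equation~\eqref{eq:49}, and your proof just spells out that derivation, using the identification $c\mapsto c\inv(0)$ and the complementation involution on ${[2m]\choose m}$ to bound the count by $|\shadeto m(X)|+|\shadeto m(X)|$. Nothing is missing.
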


\subsection{Upper bounds}
\label{sec:upper-bounds}

Recall that a family $\A$ of sets is \emph{$t$-intersecting} if $|E\cap F|\ge t$ for all
$E,F\in\A$; and a pair~$(\A,\B)$ of families of subsets of some fixed set are \emph{cross-$t$-intersecting} if 
\begin{equation}
  \label{eq:26}
  |E\cap F|\ge t\espc\text{for all $E\in\A$, $F\in\B$}.
\end{equation}
Thus $\A$ is $t$-intersecting iff $(\A,\A)$ is cross-$t$-intersecting.

We use the standard notation $S\choose k$ to denote the collection of
all $k$-subsets of $S$, and hence $[n]\choose k$ denotes the collection
of all subsets of~$[n]$ of cardinality $k$. Let $I(n,k,t)$ denote
the family of all $t$-intersecting subfamilies of $[n]\choose k$
(where $t\le k\le n$). 
Define the function
\begin{equation}
  \label{eq:31}
  M(n,k,t)=\max_{\A\in I(n,k,t)}|\A|.
\end{equation}
The investigation into the function $M$ and the structure of the
maximal families was initiated by Erd\"os--Ko--Rado in 1938, but not
published until~\cite{MR0140419}. In this paper, they gave a complete
solution for the case $t=1$, and posed what became one of most famous
open problems in this area. The following so called
\emph{$4m$-conjecture} for the case $t=2$:
\begin{equation}
  \label{eq:39}
  M(4m,2m,2)=\frac 12 \biggl({{4m}\choose{2m}}-{{2m}\choose m}^{\hspb 2}\biggr).
\end{equation}
We briefly explain the significance of the right hand side expression.
Define families
\begin{equation}
  \label{eq:62}
  \F_i(n,k,t)=\left\{F\in{[n]\choose k}:|F\cap[t+2i]|\ge t+i\right\}
  \espc\text{for $0\le i\le\frac{n-t}2$}.
\end{equation}
Clearly each $\F_i(n,k,t)$ is $t$-intersecting. It is not hard to
compute that the cardinality of $\F_{m-1}(4m,2m,2)$ is equal to the right hand
side of equation~\eqref{eq:39}. 
Indeed, the $4m$-conjecture was
generalized by Frankl in 1978 (\cite{MR519277}) as follows: For all
$1\le t\le k\le n$,
\begin{equation}
  \label{eq:63}
  M(n,k,t)=\max_{0\le i\le\frac{n-t}2}|\F_i(n,k,t)|.
\end{equation}
In 1995, the general conjecture was proven true by
Ahlswede--Khachatrian in \cite{MR1429238}.

Since we are interesting in applying lemma~\ref{l-2}, we define for $1\le t\le k\le m\le n$,
\begin{equation}
  \label{eq:64}
  M_0(n,m,k,t)=\max_{\A\in I(n,k,t)}|\shadeto m(\A)|,
\end{equation}
i.e.~$M_0(n,m,k,t)$ is the maximum size of the $m$-shade of a
$t$-intersecting family of $k$-subsets of $[n]$.  We have that
\begin{equation}
  \label{eq:65}
  M_0(n,m,k,t)\le M(n,m,t),
\end{equation}
but this is not optimal. In~\cite{Hirs2}, we argued that the following
conjecture should be true.

\begin{conjecture}
\label{j-1}
$\displaystyle M_0(n,m,k,t)=\max_{0\le i\le\min(k-t,\frac{n-t}2)}|\F_i(n,m,t)|$.
\end{conjecture}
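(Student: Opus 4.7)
The plan is to follow the standard extremal--set--theory paradigm: the lower bound is realized by the families $\F_i(n,k,t)$ themselves, while the upper bound is to be obtained by a shifting (compression) reduction followed by an Ahlswede--Khachatrian-type structural classification, adapted to the shade setting.

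For the lower bound, I would first verify the identity $\shadeto m(\F_i(n,k,t))=\F_i(n,m,t)$ for every $i\le\min(k-t,(n-t)/2)$. The containment $\subseteq$ is immediate: an $m$-set containing a $k$-set with at least $t+i$ elements of $[t+2i]$ itself has that many. For $\supseteq$, given $B\in\F_i(n,m,t)$ with $b=|B\cap[t+2i]|\ge t+i$, one constructs $A\subseteq B$ with $|A|=k$ and $|A\cap[t+2i]|\ge t+i$ by choosing $a$ elements inside $[t+2i]$ with $a\in[\max(t+i,k-m+b),\min(k,b)]$, and this interval is nonempty under $t+i\le k\le m$ and $b\ge t+i$. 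Since $\F_i(n,k,t)\in I(n,k,t)$, this gives $M_0(n,m,k,t)\ge\max_i|\F_i(n,m,t)|$.

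For the upper bound I would proceed in two stages. First, apply the classical left-shifts $S_{ij}$ iteratively, producing a shifted $t$-intersecting family $\A^*$ with $|\shadeto m(\A^*)|\ge|\shadeto m(\A)|$. Preservation of the $t$-intersecting property under $S_{ij}$ is standard; the shade inequality requires constructing an injection from $\shadeto m(\A)\setminus\shadeto m(S_{ij}\A)$ into $\shadeto m(S_{ij}\A)\setminus\shadeto m(\A)$, in the spirit of the shift arguments used to prove Kruskal--Katona. Second, invoke the Ahlswede--Khachatrian ``pushing--pulling'' classification to conclude that any maximal shifted $t$-intersecting subfamily of ${[n]\choose k}$ is contained in some $\F_i(n,k,t)$ with $i\le\min(k-t,(n-t)/2)$. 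Combined with the lower-bound identity, this yields $\shadeto m(\A^*)\subseteq\shadeto m(\F_i(n,k,t))=\F_i(n,m,t)$, proving the upper bound.

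The hard part will be the shifting step for shades. Ahlswede--Khachatrian's theorem is a \emph{cardinality} result, proven by tracking how $|\A|$ changes under compressions; here the relevant quantity is $|\shadeto m(\A)|$, which is affected by shifts in more delicate ways because two $k$-sets in $\A$ may share $m$-supersets, so a $k$-set can be ``redundant'' for the shade in a configuration-dependent fashion. Establishing $|\shadeto m(S_{ij}\A)|\ge|\shadeto m(\A)|$ in full generality thus requires a genuinely new compression argument — or one may need to bypass shifting entirely and couple a Kruskal--Katona-type shadow estimate with the Frankl/Ahlswede--Khachatrian structural input directly, matching the bound $\max_i|\F_i(n,m,t)|$ without first maximizing cardinality. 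This is presumably the reason the conjecture remains open.
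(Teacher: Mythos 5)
You should first be aware that the paper contains no proof of this statement: it is stated precisely as an open conjecture (its justification is only referenced to~\cite{Hirs2}), and the main theorem of the paper merely assumes its asymptotic consequence. So there is nothing in the paper to match your argument against; the question is whether your proposal itself constitutes a proof, and it does not. The lower-bound half is fine: the identity $\shadeto m(\F_i(n,k,t))=\F_i(n,m,t)$ for $i\le\min(k-t,\frac{n-t}2)$ is correct and routine (take $t+i$ elements of $B\cap[t+2i]$ and pad inside $B$, using $t+i\le k\le m$), and it gives $M_0(n,m,k,t)\ge\max_i|\F_i(n,m,t)|$. But the upper bound, which is the entire content of the conjecture, is left open in your own write-up: the inequality $|\shadeto m(S_{ij}\A)|\ge|\shadeto m(\A)|$ for the left-shifts is exactly the step you concede you cannot prove, and without it (or a substitute) the argument does not start. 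A plan whose crucial step is announced as ``requires a genuinely new compression argument'' is a restatement of the problem, not a proof.

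There is also a flaw in the second stage as you describe it, independent of the compression issue. Ahlswede--Khachatrian's theorem classifies the \emph{maximum-cardinality} $t$-intersecting families; it does not assert that every maximal (saturated) shifted $t$-intersecting subfamily of $\binom{[n]}{k}$ is contained in some $\F_i(n,k,t)$. That stronger structural claim is false: already for $t=1$ the shifted Hilton--Milner family $\{A\in\binom{[n]}{k}:1\in A,\ A\cap[2,k+1]\ne\emptyset\}\cup\{[2,k+1]\}$ is maximal intersecting but contained in no $\F_i(n,k,1)$. Moreover, a family maximizing $|\shadeto m(\A)|$ need not have maximum cardinality, so even a shade-monotone shifting step would only reduce you to shifted families, after which the cardinality-based AK machinery does not directly bound the shade. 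So the reduction ``shift, then quote AK'' cannot close the upper bound in the form you propose; this is consistent with your own closing remark, and it is why the statement remains a conjecture both in the paper and after your attempt.
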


\noindent We then deduced from conjecture~\ref{j-1} that whenever $k$
and $t$ are functions of $m$ satisfying $k(m)=o(m)$, 
i.e.~$\lim_{m\to\infty}k(m)\div m=0$,
$\lim_{m\to\infty}k(m)=\infty$ and
$\lim_{m\to\infty}t(m)\div\sqrt{k(m)}=\infty$, 
\begin{equation}
\label{eq:9}
\lim_{m\to\infty}\frac{M_0(2m,m,k(m),t(m))}
  {{2m\choose m}}=0.
\end{equation}

\subsection{Cross-$t$-intersecting families}
\label{sec:cross-t-intersecting}

It turns out that for out application we need upper bounds on the size
of shades of cross-$t$-intersecting families (cf.~equation~\eqref{eq:26}).
Let $C(n,k,l,t)$ be the collection of all pairs $(\A,\B)$ of cross-$t$-intersecting
families, where $\A\subseteq{[n]\choose k}$ and $\B\subseteq{[n]\choose l}$. Then the cross-$t$-intersecting function
corresponding to $M$ is defined by
\begin{equation}
  \label{eq:57}
  N(n,k,l,t)=\max_{(\A,\B)\in C(n,k,l,t)}|\A|\cdot|\B|.
\end{equation}

There are a number of results on cross-$t$-intersecting families in
the literature; however, the state of knowledge seems very meager
compared with $t$-intersecting families. 
The following theorem, proved in~\cite{MR90g:05008}, is the strongest
result of its kind that we were able to find. 

\begin{thm}[Matsumoto--Tokushige, 1989]
$N(n,k,l,1)={n-1\choose k-1}{n-1\choose l-1}$ whenever $2k,2l\le n$.
\end{thm}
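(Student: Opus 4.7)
I would combine Frankl's shifting technique with induction on $n$. For $1 \le i < j \le n$, the shift operator $S_{ij}$ replaces each $F$ in a family $\F$ by $(F \setminus \{j\}) \cup \{i\}$ whenever $j \in F$, $i \notin F$, and the shifted set is not already in $\F$. The standard preliminary lemma, whose proof is a short case analysis, is that if $(\A, \B)$ is cross-$1$-intersecting then so is $(S_{ij}(\A), S_{ij}(\B))$. Since $|S_{ij}(\F)| = |\F|$, iterating these shifts to a fixed point reduces the problem to the case where both $\A$ and $\B$ are shifted (left-compressed), preserving $|\A| \cdot |\B|$ and the cross-intersecting condition.

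Next I would induct on $n$. Split each shifted family around the maximal element: set $\A_0 = \A \cap \binom{[n-1]}{k}$ and $\A_1 = \{A \setminus \{n\} : A \in \A,\, n \in A\} \subseteq \binom{[n-1]}{k-1}$, and define $\B_0, \B_1$ analogously. The cross-$1$-intersecting property of $(\A, \B)$ translates into each of $(\A_0, \B_0)$, $(\A_1, \B_0)$, $(\A_0, \B_1)$ being cross-$1$-intersecting in $[n-1]$, while $(\A_1, \B_1)$ carries no constraint since the corresponding sets in $\A, \B$ already share $n$. Crucially, shiftedness yields the shade inclusion $\nabla(\A_1) \subseteq \A_0$ (shade in $[n-1]$): if $A \in \A$ with $n \in A$ and $i \in [n-1] \setminus A$, then $s_{in}(A) = (A \setminus \{n\}) \cup \{i\} \in \A_0$; symmetrically $\nabla(\B_1) \subseteq \B_0$.

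Expanding $|\A| \cdot |\B| = |\A_0||\B_0| + |\A_0||\B_1| + |\A_1||\B_0| + |\A_1||\B_1|$ and applying the inductive hypothesis to the three cross-intersecting pairs in $[n-1]$ gives
\[
|\A_0||\B_0| \le \tbinom{n-2}{k-1}\tbinom{n-2}{l-1},\ \ |\A_1||\B_0| \le \tbinom{n-2}{k-2}\tbinom{n-2}{l-1},\ \ |\A_0||\B_1| \le \tbinom{n-2}{k-1}\tbinom{n-2}{l-2}.
\]
If one could moreover establish $|\A_1||\B_1| \le \binom{n-2}{k-2}\binom{n-2}{l-2}$, summing and applying Pascal's identity twice would yield the desired $\binom{n-1}{k-1}\binom{n-1}{l-1}$. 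The main obstacle is bounding the unconstrained product $|\A_1||\B_1|$: this must come from shiftedness via the shade inclusions $\nabla(\A_1) \subseteq \A_0$ and $\nabla(\B_1) \subseteq \B_0$, which, combined with Kruskal--Katona, force $|\A_0|, |\B_0|$ to grow with $|\A_1|, |\B_1|$ and so restrict the latter once the former are pinned down by the cross-intersecting bounds above. In parallel, the boundary cases $2k = n$ or $2l = n$ (where induction in $[n-1]$ nominally needs $2k, 2l \le n - 1$) must be handled by a direct shifted-structure argument: typically, $\A_1 = \binom{[n-1]}{k-1}$ forces $\B \subseteq \{B : n \in B\}$ so the bound is immediate, while otherwise $|\A_1|$ is strictly smaller and the shade inequality compensates. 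The delicate balancing of the four terms together with this boundary analysis forms the technical heart of the proof.
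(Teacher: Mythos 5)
The paper offers no proof of this statement: it is quoted verbatim from Matsumoto--Tokushige's 1989 paper, so your argument has to stand on its own, and as written it does not. The decisive step is precisely the one you leave open, the bound $|\A_1||\B_1|\le\binom{n-2}{k-2}\binom{n-2}{l-2}$, and your suggestion that it should fall out of Kruskal--Katona applied to the shade inclusions $\nabla(\A_1)\subseteq\A_0$, $\nabla(\B_1)\subseteq\B_0$ is not worked out and is not the natural tool. The standard way to close this gap is a different use of shiftedness: for left-compressed cross-$1$-intersecting families one proves that every $A\in\A$ and $B\in\B$ satisfy $A\cap B\cap[k+l-1]\ne\emptyset$; since $2k,2l\le n$ gives $k+l\le n$, the common element is $<n$, so $(\A_1,\B_1)$ is itself cross-intersecting in $[n-1]$ and the inductive hypothesis covers all four terms of your expansion. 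Without some lemma of this kind the "unconstrained" term is simply not controlled, and that term is where the whole difficulty of the theorem sits.

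Second, the boundary cases are a genuine obstacle rather than the routine check you suggest. When $2k=n$ (or $2l=n$) the inductive hypothesis is unavailable for the pairs living in $[n-1]$, and the separate bound on $|\A_0||\B_0|$ can actually fail there (with $2k>n-1$ any two $k$-subsets of $[n-1]$ meet, so $\A_0$ is unrestricted by $\B_0$); your "typically $\A_1=\binom{[n-1]}{k-1}$ forces..." remark does not dispose of this, and the base cases $k=1$ or $l=1$ and small $n$ are not addressed at all. So the proposal is a plausible plan in the Frankl compression tradition -- and complete proofs along such lines do exist in the literature, while Matsumoto and Tokushige themselves argued by a different (Katona-style cyclic permutation) method -- but as it stands it is a sketch whose technical heart is missing.
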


Generalizing $M_0$, we define the maximum size $N_0(n,m_k,m_l,k,l,t)$
of the product of the $m_k$-shade with the $m_l$-shade of a pair of
cross-$t$-intersecting families of $k$-subsets and $l$-subsets of
$[n]$, respectively:
\begin{equation}
  \label{eq:58}
  N_0(n,m_k,m_l,k,l,t)
  =\max_{(\A,\B)\in C(n,k,l,t)}|\shadeto {m_k}(\A)|\cdot|\shadeto {m_l}(\B)|. 
\end{equation}
For purposes of our dichotomy, we are exclusively interested in the numbers
$N_0(2m,m,m,k,k,t)$. Thus we define
\begin{equation}
  \label{eq:45}
  N_1(n,m,k,t)=N_0(n,m,m,k,k,t).
\end{equation}

In~\cite{Hirs2}, we justify a conjecture for the value of
$N_0(n,m_k,m_l,k,l,t)$, corresponding to conjecture~\ref{j-1}. 
We then consider the asymptotic formula for cross intersecting pairs,
corresponding to equation~\eqref{eq:9}:

\begin{conjecture}
\label{co:1}
Assume $k(m)=o(m)$ and $\lim_{m\to\infty}k(m)=\infty$. Suppose that
$\lim_{m\to\infty}\frac{t(m)}{\sqrt{k(m)}}=\infty$.~Then
\begin{equation}
\label{eq:90}
\lim_{m\to\infty}
\frac{\sqrt{N_1(2m,m,k(m),t(m))}}{{2m\choose m}}=0.
\end{equation}
\end{conjecture}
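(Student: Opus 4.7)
The plan is to reduce Conjecture~\ref{co:1} to the $t$-intersecting asymptotic~\eqref{eq:9} via the shade analog of the classical relation between cross-$t$-intersecting and $t$-intersecting bounds. Specifically, I aim to establish
\begin{equation*}
  \sqrt{N_1(2m,m,k,t)}\le M_0(2m,m,k,t)\qquad(\star),
\end{equation*}
which is the shade counterpart of the identity $\sqrt{N(n,k,k,1)}=M(n,k,1)$ underlying the Matsumoto--Tokushige theorem. Once $(\star)$ is in hand, dividing both sides by $\binom{2m}{m}$ and invoking~\eqref{eq:9} under the stated hypotheses $k(m)=o(m)$, $k(m)\to\infty$, $t(m)/\sqrt{k(m)}\to\infty$ immediately yields~\eqref{eq:90}.

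To attack $(\star)$, I would first observe that for any $(\A,\B)\in C(2m,k,k,t)$ the shade pair $(\shadeto m(\A),\shadeto m(\B))$ is itself cross-$t$-intersecting in $\binom{[2m]}{m}$: if $X\supseteq A\in\A$ and $Y\supseteq B\in\B$ with $|X|=|Y|=m$, then $|X\cap Y|\ge|A\cap B|\ge t$. I would then invoke the cross-$t$-intersecting extremal conjecture of~\cite{Hirs2} (the analog of Conjecture~\ref{j-1} for~$N_0$), which should identify the extremum in the balanced symmetric regime as $(\A,\B)=(\F_i(2m,k,t),\F_i(2m,k,t))$ for a single optimal $i\in[0,k-t]$; a short inclusion--exclusion computation confirms that mixed pairs $(\F_i,\F_j)$ with $i\ne j$ fail to be cross-$t$-intersecting, so restriction to Ahlswede--Khachatrian templates forces symmetry. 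In the relevant regime $k=o(m)$ one has $k-t\le m-k$ asymptotically, whence $\shadeto m(\F_i(2m,k,t))=\F_i(2m,m,t)$ for all admissible $i$, and Conjecture~\ref{j-1} then gives
\begin{equation*}
  \sqrt{N_1(2m,m,k,t)}
   =\max_{0\le i\le k-t}|\F_i(2m,m,t)|
   =M_0(2m,m,k,t),
\end{equation*}
which is $(\star)$.

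The hard part will be $(\star)$ itself, or equivalently the cross-$t$-intersecting extremal classification from~\cite{Hirs2}. Even the nonshade analog $\sqrt{N(n,k,k,t)}\le M(n,k,t)$ is classical only for $t=1$ and seems to require delicate Ahlswede--Khachatrian-style compression for $t\ge 2$. A more direct combinatorial route, bypassing~\cite{Hirs2}, would attempt a simultaneous bipartite shifting argument: compressing $\A$ and $\B$ in tandem preserves cross-$t$-intersection, and a standard lemma shows that fully shifted cross-$t$-intersecting pairs consist of two individually $t$-intersecting families, to which Conjecture~\ref{j-1} applies. The snag is that upper shades are \emph{non-increasing} under shifting, so the natural compression argument bounds the shade of the \emph{shifted} pair rather than of the original; overcoming this asymmetry --- equivalently, proving the shade version of the cross-intersecting extremal conjecture of~\cite{Hirs2} --- is the central difficulty, and is precisely the role of the extremal analysis carried out in that companion paper.
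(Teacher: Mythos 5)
The statement you set out to prove is not proved anywhere in the paper: Conjecture~\ref{co:1} is stated as an open problem of Sperner theory, motivated heuristically in the companion paper \cite{Hirs2}, and the main result (Theorem~\ref{u-3}) is explicitly \emph{conditional} on it. So there is no proof in the paper to compare with, and your proposal does not supply one either. Even granting your key inequality $(\star)$, the reduction only lands on \eqref{eq:9}, which the paper says was \emph{deduced from Conjecture~\ref{j-1}} in \cite{Hirs2}; Conjecture~\ref{j-1} is itself open. So the best possible outcome of your argument is a conditional statement (Conjecture~\ref{co:1} follows from Conjecture~\ref{j-1} plus the cross-intersecting classification of \cite{Hirs2}), not a proof of \eqref{eq:90}.

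The substantive gap is $(\star)$ itself, and it is not a technical detail but essentially the whole content of the conjecture. Note that the easy inequality goes the other way: taking $\B=\A$ for a $t$-intersecting family attaining $M_0$ gives $\sqrt{N_1(2m,m,k,t)}\ge M_0(2m,m,k,t)$, so $(\star)$ asserts that the cross maximum is achieved (after taking square roots) by a symmetric pair. That is exactly the unproven cross-$t$-intersecting extremal statement from \cite{Hirs2} that you invoke; your ``forcing symmetry'' step --- observing that mixed template pairs $(\F_i,\F_j)$, $i\ne j$, are not cross-$t$-intersecting --- does not restrict the extremal pair to template pairs unless one already has the classification being assumed. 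You yourself concede that the compression route fails because upper shades are not monotone under shifting, and that even the non-shade analogue $\sqrt{N(n,k,k,t)}\le M(n,k,t)$ is classical only for $t=1$. A final caution from the paper itself: its footnote records that the stronger combinatorial conjecture of the earlier version \cite{Hirs} was \emph{disproved} in \cite{Hirs2}, so treating such extremal classifications as available is genuinely hazardous. In short, what you have is a plausible reduction sketch that leaves Conjecture~\ref{co:1} exactly as open as the paper does.
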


\section{The theorem}
\label{sec:dichotomy}

The main result of this paper is the following.

\begin{thm}
\label{u-3}
Assume conjecture~\textup{\ref{co:1}} is true.
Let $\A\subseteq\tree(f)$ be a weakly distributive analytic family
of infinitely branching trees satisfying the ccc that has locally
bounded branching decay for $x^{<1}$. Then there exists a colouring $c:\seq f\to\two$ and a
$T\in\A$ such that $S(n)$ is nonhomogeneous for $c$ for all but
finitely many $n$, for all $S\in\A_T$.
\end{thm}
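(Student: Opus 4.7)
The plan is to construct the desired colouring via the coideal forcing $\Q(\Hcal)$ of Section~\ref{sec:partial-order}, using conjecture~\ref{co:1} (through Lemma~\ref{l-2}) to bound the fraction of colourings for which $S(n)$ is homogeneous.

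First I would fix $T\in\A$ witnessing locally bounded branching decay for $x^{<1}$, so that by Proposition~\ref{p-8}, every $S\in\A_T$ satisfies $h_S(n)/h_T(n)^\alpha\to\infty$ for every $\alpha\in(0,1)$. After thinning to a cofinal set of levels one may assume $|T(n)|=2m_n$ is even, and identify $T(n)$ with $[2m_n]$. Next, take $F(n)={T(n)\choose m_n}$, fix a nonprincipal selective ultrafilter $\U$ on $\N$ and some $g\in\irrationals$ with $g(n)/|F(n)|\to0$, and form the selective coideal $\Hcal=\Hcal(\U,F,g)$ of Proposition~\ref{p-1}. The generic of $\Q=\Q(\Hcal)$ picks, for $n\in\dot D$, an $m_n$-subset $\dot c(n)\subseteq T(n)$, which one reinterprets as a colouring of $T(n)$ (with $c=0$ on $\dot c(n)$ and $c=1$ on its complement, extended arbitrarily off $T$ and off $\dot D$).

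The combinatorial heart is then the claim: for each $S\in\A_T$, the set of $n$ at which the generic colouring makes $S(n)$ nonhomogeneous lies in the coideal $\Hcal$. I would choose auxiliary parameters $k_n,t_n$ satisfying $k_n=o(m_n)$, $k_n\to\infty$, and $t_n/\sqrt{k_n}\to\infty$ (for instance $k_n=\lfloor h_T(n)^{1/2}\rfloor$ and $t_n=\lfloor h_T(n)^{1/3}\rfloor$), and use the ccc and weak distributivity of $\A$ to reduce the question to the shade of a cross-$t_n$-intersecting pair of $k_n$-subset families in ${T(n)\choose k_n}$: pairwise compatibility in $(\A,\subseteq)$, combined with the branching-decay hypothesis (which ensures any common extension $S\subseteq S_1\cap S_2$ has $|S(n)|\gg t_n$), forces that suitable $k_n$-sized ``summaries'' at level~$n$ of compatible subtrees overlap in at least $t_n$ elements. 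Lemma~\ref{l-2} would then bound the fraction of colourings in $F(n)$ for which any such $S(n)$ is homogeneous by a multiple of $\sqrt{N_1(2m_n,m_n,k_n,t_n)}/{2m_n\choose m_n}$, which tends to~$0$ by conjecture~\ref{co:1}.

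With this estimate in hand, I would invoke the pure-extension lemmas (Lemmas~\ref{l-6} and~\ref{l-1}) to obtain, for each $S$, a pure extension of any given condition forcing that $S(n)$ is nonhomogeneous for all but finitely many $n\in\dot D$; diagonalizing through a countable dense subfamily of $\A_T$ (available by ccc) would yield a single condition $(a,A)\in\Q$ from which one reads off, in the ground model, a colouring $c$ together with a tree $T'\subseteq T$ in $\A$ witnessing the conclusion. The hardest step will be the combinatorial reduction of the third paragraph: converting pairwise compatibility in~$(\A,\subseteq)$ into a cross-$t_n$-intersecting structure at each level~$n$, uniformly over all $S\in\A_T$, is where weak distributivity, the branching-decay hypothesis, and the pure-extension property of $\Q(\Hcal)$ must be tightly interlocked to supply the input demanded by conjecture~\ref{co:1}.
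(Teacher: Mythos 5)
Your overall frame (the coideal forcing $\Q(\Hcal(\U,F,g))$ on balanced colourings, Lemma~\ref{l-2} plus Conjecture~\ref{co:1} to make homogeneity-permitting colourings scarce, and absoluteness to pull the colouring back to the ground model) matches the paper, but the combinatorial core of your third and fourth paragraphs has genuine gaps. First, you build the coideal over colourings of $T(n)$ itself and then need ``$k_n$-sized summaries'' because the level sets $S(n)$ are far too large to play the role of the $k$-sets in Conjecture~\ref{co:1}; the paper avoids this by first invoking Lemma~\ref{l-5} to reduce to a $T\in\A$ with $\mu_f([T])=0$, colouring the \emph{full} level of $\seq f$ (of size $2m(n)=\prod_{i<n}f(i)$) and taking $k(n)=|T(n)|=o(m(n))$, so the conjecture's hypothesis $k=o(m)$ is met by the actual level sets and no summaries are needed. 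Second, and more seriously, your reduction rests on the claim that compatibility in $(\A,\subseteq)$ yields a cross-$t_n$-intersecting pair of families to which $N_1$ can be applied. But $\A_T$ is only ccc, not pairwise compatible, so the family of level sets (or of summaries --- which need not inherit $t_n$-intersection from a large intersection of the parent sets at all) is not cross-$t_n$-intersecting, and no upper bound on homogeneous colourings follows this way. The paper uses the conjecture in the opposite, contrapositive direction: assuming some name $\dot S$ is forced homogeneous infinitely often on $\dot D$, it builds a tree of conditions whose possible values $R(u)$ of $\dot S(n_i)$ are homogeneous for the generic colours, shows (Claim~\ref{c-5}) that the $m(n_i)$-shades are larger than $\sqrt{N_1}$, concludes that along two distinct branches the pair of value-families \emph{cannot} be cross-$k(n_i)^\beta$-intersecting, and then uses a Cohen real to steer continuum many interpretations of $\dot S$ whose pairwise intersections violate the lower bound~\eqref{eq:70}; this produces a perfect antichain and contradicts the ccc. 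The ccc enters only there, as the source of the contradiction, not as a tool for making families intersect.

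Third, your final step --- ``diagonalizing through a countable dense subfamily of $\A_T$ (available by ccc)'' --- is not available: ccc posets need not have countable dense subsets, and the statement must be forced for \emph{all} members of $\A_T$ (indeed for names $\dot S$), which is precisely why the paper argues by contradiction with a single name $\dot S$ and a tree/fusion construction inside a countable elementary submodel rather than handling members of $\A_T$ one at a time. Without replacing these two steps (the cross-intersecting claim and the countable-dense diagonalization) by an argument of the paper's antichain type, the proposal does not go through.
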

\begin{proof}
Let $\A$ be as hypothesized. Find $T'\in\A$ such that
\begin{equation}
h_{T'}\Rln h_S\espc\text{for all $S\in\A_{T'}$}\label{eq:17}
\end{equation}
(cf.~equation~\eqref{eq:93}), where $\Hcal=\{x^{\alpha}:0<\alpha<1\}$.
By lemma~\ref{l-5} we may assume that there exists $T\subseteq T'$ in $\A$ with
$\mu_f([T])=0$. Then setting $k(n):=h_T(n)$, we have
\begin{gather}
  \label{eq:60}
  |S(n)|\le k(n)\espc\text{for all $n$, for all $S\in\A_T$},\\
  \label{eq:61}
  \lim_{n\to\infty}\frac{k(n)}{\prod_{i=0}^{n-1} f(i)}=0.
\end{gather}
Furthermore, by lemma~\ref{p-2}, proposition~\ref{p-6} and
equation~\eqref{eq:17}, $h_T\Rln h_S$ for all $S\in\A_T$, and thus by
proposition~\ref{p-8}, we have
\begin{equation}
  \label{eq:70}
  \liminf_{n\to\infty}\frac{|S(n)|}{k(n)^\alpha}=\infty\espc\text{for all
    $S\in\A_T$, for all $0<\alpha<1$}.
\end{equation}
Finally, we notice that by increasing the function $f$ if necessary, we may assume
that $f(0)$ is even.

Now we are ready to specify the parameters for the coideal. 
Define~$m\in\irrationals$ by
\begin{equation}
  \label{eq:5}
  m(n)=\frac{\prod_{i=0}^{n-1} f(i)}2,
\end{equation}
which makes sense, taking the empty product as zero, since $f(0)$ is even.
The function $F$ on $\N$ is given by
\begin{equation}
  \label{eq:3}
  F(n)={2m(n)\choose[m(n)]}\espc\text{for all $n$},
\end{equation}
where the ``$2m(n)$'' is identified with the set 
\begin{equation}
  \label{eq:53}
 \prod_{i=0}^{n-1}\{0,\dots, f(i)-1\}
\end{equation}
of cardinality $2m(n)$. Fix $\frac12<\beta<1$, and define
$g\in\irrationals$ by
\begin{equation}
  \label{eq:6}
  g(n)=\left\lceil\sqrt{N_1(2m(n),m(n),k(n),k(n)^\beta)}\right\rceil.
\end{equation}
Note that by~\eqref{eq:61}, $k(n)=o(m(n))$,
$\lim_{n\to\infty}k(n)=\infty$ since $T$ is infinitely branching and
$\lim_{n\to\infty}k(n)^\beta\div\sqrt{k(n)}
=\lim_{n\to\infty}k(n)^{\beta-\frac12}=\infty$. Therefore,
conjecture~\ref{co:1} applies, giving
$\lim_{n\to\infty}\frac{g(n)}{|F(n)|}
=\lim_{n\to\infty}\frac{\sqrt{N_1(2m(n),m(n),k(n),k(n)^\beta)}}
{{2m(n)\choose m(n)}}=0$. Thus proposition~\ref{p-1} applies.

By going to a forcing extension, if necessary, we assume that there exists a
selective nonprincipal ultrafilter $\U$ on $\N$. Note that this existence follows from~$\ch$, or
more generally if the covering number of the meager ideal is equal to the
continuum. 
We are forcing with $\Q=\Q(\Hcal(\U,F,g))$, and we claim that $\Q$
forces that for all $S\in\A_T$,
\begin{equation}
  \label{eq:7}
  S(n)\text{ is nonhomogeneous for }\dot c
  \text{ for all but finitely many $n\in\dot D$}.
\end{equation}
This will complete the proof, because the existence of $D$ and $c$ as in the first
alternative is a $\Sigma^1_2$ statement, and thus Schoenfield's Absoluteness Theorem
applies to give these two in the ground model. Once we have a
colouring that is nonhomogeneous on some fixed infinite set, this
trivially yields a colouring for which each $S\in\A_T$ is nonhomogeneous
for all but finitely many levels. 

Suppose towards a contradiction that this is false. 
Then there exists a $\Q$-name
$\dot S$ for a member of $\A_T$ and a condition $(a,A)\in\Q$ forcing that
\begin{equation}
  \label{eq:19}
  \dot S(e_{\dot D}(n))\text{ is homogeneous for }\dot c\text{ for infinitely many $n$}.
\end{equation}

Before continuing, we give a very rough idea of how a contradiction comes
about. The definition of $\Hcal(\U,F,g)$ ensures that, in the limit as
$n\to\U$, the $A(n)$'s contain a large number of colourings of the
tree of height $n$ in equation~\eqref{eq:53}. From this
and~\eqref{eq:19} we argue (in
claim~\ref{c-5}) that along some ``path'' $p$, where $p\restriction n$
decides $\dot S(n-1)$, the uncertainty in $\dot S(n)$ is large,
i.e.~the set $R(p\restriction n)$ of all finite trees $t$ such that some extension of
$(a,A)$ forces $\dot S(n)=t$ is large; more precisely, we argue that
the $m$-shade of $R(p\restriction n)$ is large. This entails that
along two different paths $p\ne q$, the product of the cardinalities
of the pair $(R(p\restriction n),R(q\restriction n))$ will be too
large to be $t(n)$-intersecting for a suitably large $t(n)$. This in
turn will entail that the two interpretations of $\dot S$ along the
paths $p$ and $q$ will have an intersection whose branching is too low
to be a member of $\A$. Hence, from uncountably many paths we obtain
an uncountable antichain. 

Note that $(b,B)$ decides $e_{\dot D}(|b|-1)=\hat b$ 
and not $e_{\dot D}(|b|)$. 

\begin{claim}
\label{c-3}
The set of all conditions $(b,B)\in\Q$ forcing that
$\dot S(e_{\dot D}(|b|))$ is homogeneous for $\dot c$ is dense below $(a,A)$.
\end{claim}
\begin{proof}
Fix $(a_1,A_1)\le(a,A)$.
By lemma~\ref{l-1} there is a $B\subseteq A_1$ in $\Hcal$ such that $(a_1\cup b,B-b)$
decides whether or not $\dot S(e_{\dot D}(|a_1\cup b|))$ is homogeneous for 
$\dot c$ for every finite partial function $b\subseteq B$. 
And by~\eqref{eq:19}, there exists $(b,C)\le(a_1,B)$ and $n\ge|a_1|$ such that $(b,C)$
forces $\dot S(e_{\dot D}(n))$ is homogeneous for $\dot c$.
By going to an extension of $(b,C)$, we can assume that $|b|\ge n$. 
Then letting $d\sqsubseteq b$ be the initial segment of length~$n$,
$(d,B-d)$ decides whether $\dot S(e_{\dot D}(|d|))$ is homogeneous for $\dot c$
because $d\setminus a_1\subseteq B$ and $a_1\sqsubseteq d$ as $|a_1|\le n$. 
However, as $(b,C)\le(d,B-d)$, it must decide this
positively. Since $(d,B-d)\le(a_1,A_1)$, the proof is complete.
\end{proof}

Let $M\prec H_{(2^{\aleph_0})^+}$ be a countable elementary submodel
containing $f$, $T$, $\U$ and $\dot S$.
Let $\overbar M$ be its transitive collapse, and
let $\D_n$ ($n\in\N$) enumerate all of the dense subsets 
of $\Q^{\overbar M}=\Q\cap \overbar M$ in $\overbar M$. 
We construct a subtree $U$ of $(\fpf(F),\sqsubseteq)$, conditions $(b_u,B_u)$ ($u\in U$)
in $\Q\cap \overbar M$, $R(u)\subseteq\seq f$ ($u\in U$)
and $n_i\in\N$ ($i\in\N$) by recursion on $|u|$ so that
\begin{enumerate}[label=(\roman*), ref=\roman*, widest=viii]
\item\label{item:8} $(b_\emptyset,B_\emptyset)\le(a,A)$, 
\item\label{item:5} $n_{i}\in\bigcap_{|u|=i}\dom(B_u)$,
\item\label{item:6} $B_u(n_i)> 2g(n_i)$ for all $|u|=i$,
\item $u\sqsubseteq v$ implies $(b_u,B_u)\le(b_v,B_v)$,
\item\label{item:9} $(b_u,B_u)\in\D_{|u|}$,
\item\label{item:3} $(b_u,B_u)$ forces that $\dot S(e_{\dot D}(|b_u|))$ 
is homogeneous for $\dot
  c$,
\item\label{item:28} $U(i+1)=\{u\ext s:u\in U(i)$, $s=(n_i,x)$, 
$x\in B_u(n_i)\}$,
\item\label{item:29} ${b_u}\bigext s\sqsubseteq b_{u\ext s}$ 
for all $u\ext s\in U(i+1)$, 
\item\label{item:7} $(b_{u\ext s},B_{u\ext s})\forces\dot S(n_i)
=R(u\ext s)$ for all $|u|=i$.
\end{enumerate}
It is possible to satisfy~\eqref{item:5} and~\eqref{item:6} by the definition of
$\Hcal(\U,F,g)$, using the fact that $\U$ is a filter. Claim~\ref{c-3} is used for~\eqref{item:3}.

\begin{claim}
\label{a-2}
$R(u\ext (n_i,x))$ is homogeneous for $x$,
for all $u\ext (n_i,x)\in U(i+1)$. 
\end{claim}
\begin{proof}
Set $s=(n_i,x)$. Since $({b_u}\bigext s,B_u)\forces e_{\dot D}(|b_u|)=n_i$,
$(b_{u\ext s},B_{u\ext s})\forces\dot S(n_i)$ is homogeneous for $\dot
c$ by~\eqref{item:3} and~\eqref{item:29}. 
And since $({b_u}\bigext s,B_u)\forces \dot c(n_i)=x$, the claim follows
from~\eqref{item:7}.
\end{proof}

\begin{claim}
\label{c-5}
For all $u\in U(i)$, 
\begin{equation}
  \label{eq:59}
  |\shadeto {m(n_i)}\{R(v):v\in\imsucc(u)\}|
  >\sqrt{N_1(2m(n_i),m(n_i),k(n_i),k(n_i)^\beta)}.
\end{equation}
\end{claim}
\begin{proof}
By lemma~\ref{l-2}, the number of colourings in
$2m(n_i)\choose [m(n_i)]$ for which some $R(v)$ is homogeneous is at most
\begin{equation}
  \label{eq:14}
  2|\shadeto {m(n_i)}\{R(v):v\in\imsucc(u)\}|.
\end{equation}
Suppose towards a contradiction that
the value in~\eqref{eq:14} is smaller than $B_u(n_i)$. Then there exists $x\in
B_u(n_i)$ for which no member of $\{R(v):v\in\imsucc(u)\}$ is homogeneous. 
Thus, putting $s=(n_i,x)$, $R(u\ext s)$ is nonhomogeneous for $x$,
contradicting claim~\ref{a-2}. 
Therefore, by~\eqref{item:6}, the value in~\eqref{eq:14} is as big as 
\begin{equation}
\label{eq:12}
B_{u}(n_i)>2\sqrt{N_1(2m(n_i),m(n_i),k(n_i),k(n_i)^\beta)}.\qedhere
\end{equation}
\end{proof}

Define a map $G$ on $\N$ where $G(i)$ is the set of all functions from
$\{0,1\}^i$ into 
\begin{equation}
  \label{eq:25}
  \prod_{u\in U(i)}\imsucc(u)
\end{equation}
for each $i$. 
Denote the space
\begin{equation}
  \label{eq:55}
  \real(G)=\prod_{i=0}^\infty G(i)
\end{equation}
and endow it with its product topology. Now let $c\in\real(G)$ be a Cohen real over
the ground model, and set $d:=\bigcup_{i=0}^\infty c(i)$; 
hence, the domain of $d$ is
$\twoseq$ and $d(s)=c(|s|)(s)\in\prod_{u\in U(|s|)}\imsucc(u)$ 
for all $s\in\twoseq$. 
In the Cohen extension, we define a mapping
$\Phi:\cantor\to\tree(f)$ as follows. 
For each $z\in\nobreak\cantor$, 
define by recursion on~$i$, $b_0(z)=\emptyset$ and
\begin{equation}
  \label{eq:27}
  b_{i+1}(z)=b_{i}(z)\ext d(z\restriction i)\bigl(b_{i}(z)\bigr)\in U(i+1).
\end{equation}
Thus for every $z\in\cantor$, $\bigcup_{i=0}^\infty b_i(z)$ is a
branch through $U$. Then let
\begin{equation}
  \label{eq:56}
  \Phi(z)=\bigcup_{i=0}^\infty R(b_i(z)).
\end{equation}

Let $\C$ be the poset directly giving the Cohen real $c$, i.e.~each
condition of $\C$ is an element of $\prod_{i=0}^{n}G(i)$ for some
$n\in\N$. 

\begin{claim}
\label{c-7}
The range of $\Phi$ is a subset of $\A_T$.
\end{claim}
\begin{proof}
Fix $z\in\cantor$, and set $x:=\bigcup_{i=0}^\infty b_i(z)$. 
By~\eqref{item:9}, $\{b_i(z):i\in\N\}$ is $\Q^{\overbar M}$-generic
over $\overbar M$. Thus, in $\overbar M[\{b_i(z):i\in\N\}]=\overbar M[x]$, 
$\dot S[x]=\Phi(z)$ by~\eqref{item:7}, where $\dot S[x]$ denotes the
generic interpretation of $\dot S$. Hence $\overbar M[x]\models\Phi(z)\in\A_T$. 
By analytic absoluteness 
between the transitive models $\overbar M[x]$ and $V[c]$, 
$V[c]\models\Phi(z)\in\A_T$.
\end{proof}

\begin{claim}
\label{c-6}
Let $y$ and $z$ be distinct reals from the ground model. Then $\Phi(y)$ and
$\Phi(z)$ are incompatible in $(\A,\subseteq)$.
\end{claim}
\begin{proof}
Fix $y\ne z$ in $\cantor$ from the ground model. Suppose towards a contradiction
that $\Phi(y)$ and $\Phi(z)$ are compatible. Then since there must be a member of
$\A_T$ contained in $\Phi(y)\cap\Phi(z)$,
by equation~\eqref{eq:70}, there some integer $j$ such that
\begin{equation}
  \label{eq:15}
  \bigl|\bigl(\Phi(y)\cap\Phi(z)\bigr)(n_i)\bigr|\ge k(n_i)^\beta\espc\text{for all $i\ge j$}.
\end{equation}

Let $r\in\C$ be a condition forcing~\eqref{eq:15}. By increasing $j$ if necessary,
we assume that 
\begin{equation}
  \label{eq:33}
  y\restriction j\ne z\restriction j.
\end{equation}
By extending $r$ if necessary, we assume that $|r|\ge j$. 
Set $i=|r|$. By claim~\ref{c-5},
\begin{multline}
  \label{eq:36}
  |\shadeto{m(n_i)}\{R(u):u\in\imsucc(b_i(y))\}|\\
  \cdot|\shadeto{m(n_i)}\{R(v):v\in\imsucc(b_i(z))\}|\\
  >N_1(2m(n_i),m(n_i),k(n_i),k(n_i)^\beta).
\end{multline}
Hence $\bigl(\{R(u):u\in\imsucc(b_i(y))\},\{R(v):v\in\imsucc(b_i(z))\}\bigr)$
cannot be a cross-$k(n_i)^\beta$-intersecting pair. 
This means that there are $u\in\imsucc(b_i(y))$ and
$v\in\imsucc(b_i(z))$ such that 
\begin{equation}
  \label{eq:37}
  |R(u)\cap R(v)|<k(n_i)^\beta.
\end{equation}
By~\eqref{eq:33}, we can define an extension $r_0=r\ext g$ of $r$,
where $g\in G(i)$, so that
\begin{equation}
  \label{eq:44}
  b_i(y)\ext g(y\restriction i)\bigl(b_i(y)\bigr)=u
  \And b_i(z)\ext g(z\restriction i)\bigl(b_i(z)\bigr)=v.
\end{equation}
However, $r_0\forces\bigl(\Phi(y)\cap\Phi(z)\bigr)(n_i)=R(u)\cap R(v)$
which by~\eqref{eq:37} is in contradiction with~\eqref{eq:15}.
\end{proof}

Claims~\ref{c-7} and~\ref{c-6} entail that, in the Cohen extension, the image under
$\Phi$ of the ground model reals is an antichain of $(\A,\subseteq)$ of size
continuum. By absoluteness, this implies the existence of an uncountable antichain
of $(\A,\subseteq)$ in the ground model, concluding the paper.
\end{proof}

\bibliographystyle{amsalpha}
\bibliography{database}

\bigskip
\noindent{\textsc{Thornhill, ON, Canada}}\\
\noindent{\textit{E-mail address}}:
  \href{mailto:j_hirschorn@yahoo.com}{\texttt{j\textunderscore
      hirschorn@yahoo.com}}\\
\noindent{\textit{URL}}:
  \href{http://www.logic.univie.ac.at/~hirschor}
  {\texttt{http://www.logic.univie.ac.at/\textasciitilde hirschor}}

\end{document}